\newcommand{\comments}[1]{}
\numberwithin{equation}{section}
\theoremstyle{plain}
\newtheorem{theorem}{Theorem}[section]
\newtheorem{lemma}[theorem]{Lemma}
\newtheorem{proposition}[theorem]{Proposition}
\newtheorem{corollary}[theorem]{Corollary}
\theoremstyle{remark}
\newtheorem{assumption}[theorem]{Assumption}
\newtheorem{remark}[theorem]{Remark}
\newcommand{\insquare}[1]{\left[#1\right]}             
\newcommand{\mcA}{\mathcal{A}}
\newcommand{\upr}{u}
\newcommand{\upost}{\upsilon}
\let\originalleft\left
\let\originalright\right
\renewcommand{\left}{\mathopen{}\mathclose\bgroup\originalleft}
\renewcommand{\right}{\aftergroup\egroup\originalright}
\renewcommand{\phi}{\varphi}
\newcommand{\eps}{\varepsilon}
\providecommand{\mathbbm}{\mathbb} 
\newcommand{\R}{\mathbbm{R}}
\newcommand{\E}{\mathbbm{E}}
\newcommand{\kk}{k}     
\newcommand{\CC}{\mathcal{C}}     
\newcommand{\A}{\mathcal{A}}
\newcommand{\mcN}{\mathcal{N}}
\newcommand{\msK}{\mathscr{K}}
\definecolor{mygreen}{rgb}{0.13,0.55,0.13}
\newcommand{\iid}{\stackrel{\text{i.i.d.}}{\sim}}
\newcommand{\Nc}{\mathcal{N}}
\newcommand{\mcF}{\mathcal{F}}
\begin{document}

\begin{frontmatter}
\title{Covariance Operator Estimation: Sparsity, Lengthscale, and Ensemble Kalman Filters}
\runtitle{Covariance Operator Estimation via Thresholding}

\begin{aug}
\author{\inits{}\fnms{O.}~\snm{Al-Ghattas}\ead[label=]{}}
\author{\inits{}\fnms{J.}~\snm{Chen}\ead[label=]{}}
\author{\inits{}\fnms{D.}~\snm{Sanz-Alonso}\ead[label=]{}}
\author{\inits{}\fnms{N.}~\snm{Waniorek}\ead[label=]{}}
\address{University of Chicago}

\end{aug}

\begin{abstract}
This paper investigates covariance operator estimation via thresholding. For Gaussian random fields with approximately sparse covariance operators, we establish non-asymptotic bounds on the estimation error in terms of the sparsity level of the covariance and the expected supremum of the field. We prove that thresholded estimators enjoy an exponential improvement in sample complexity compared with the standard sample covariance estimator if the field has a small correlation lengthscale. As an application of the theory, we study thresholded estimation of covariance operators within ensemble Kalman filters.
\end{abstract}

\begin{keyword}
\kwd{Covariance operator estimation}
\kwd{thresholding}
\kwd{small lengthscale regime}
\kwd{ensemble Kalman filters}
\end{keyword}

\end{frontmatter}


\section{Introduction}\label{sec:introduction}
This paper studies thresholded estimation of the covariance operator of a Gaussian random field. Under a sparsity assumption on the covariance model, we bound the estimation error in terms of the sparsity level and the expected supremum of the field. Using this bound, we then analyze covariance operator estimation in the interesting regime where the correlation lengthscale is small, and show that the thresholded covariance estimator achieves an exponential improvement in sample complexity compared with the standard sample covariance estimator. As an application of the theory, we demonstrate the advantage of using thresholded covariance estimators within ensemble Kalman filters. 

The first contribution of this paper is to lift the theory of covariance estimation from finite to infinite dimension.
In the finite-dimensional setting, a rich body of work \cite{wu2003nonparametric, bickel2008regularized,el2008operator,cai2012adaptive,cai2012minimax,cai2012optimal,chen2012masked,wainwright2019high,ghattas2022non} shows that, exploiting various forms of sparsity, it is possible to consistently estimate the covariance  matrix  
of a vector $u \in \R^{d_u}$ with $N \sim \log (d_u)$ samples. The sparsity of the covariance  matrix
---along with the use of thresholded, tapered, or banded estimators that exploit this structure--- facilitates an 
exponential improvement in sample complexity relative to the unstructured case, where $N \sim d_u$ samples are needed \cite{bai2008limit, gordon1985some, vershynin2010introduction}. 
In this work we investigate the setting 
in which $u$ is an infinite-dimensional random field with an approximately sparse covariance model. Specifically, 
 we generalize notions of approximate sparsity often employed in the finite-dimensional covariance estimation literature \cite{bickel2008covariance, cai2012optimal}.
We show that the statistical error of thresholded estimators can be bounded in terms of the expected supremum of the field and the sparsity level, the latter of which quantifies the rate of spatial decay of correlations of the random field.
Our analysis not only lifts existing theory from finite to infinite dimension, but also provides non-asymptotic  
moment bounds not yet available in finite dimension.

The second contribution of this paper is to showcase the benefit of thresholding in the challenging regime where the correlation lengthscale of the field is small relative to the size of the physical domain. While a vast literature in nonparametric statistics \cite{ghosal2017fundamentals} and approximation theory \cite{wendland2004scattered} highlights the key role of smoothness in determining optimal convergence rates for many nonparametric estimation tasks, our non-asymptotic theory emphasizes that the lengthscale rather than the smoothness of the covariance function drives the difficulty of the estimation problem and the advantage of thresholded estimators. Fields with small correlation lengthscale are ubiquitous in applications. For instance, they arise naturally in climate science and numerical weather forecasting, where global forecasts need to account for the effect of local processes with a small correlation lengthscale, such as cloud formation or propagation of gravitational waves. We show that thresholded estimators achieve an exponential improvement in
sample complexity: For a field with lengthscale $\lambda$ in $d$-dimensional physical space, the standard sample covariance requires $N \sim \lambda^{-d}$ samples, while thresholded estimators only require $N \sim \log (\lambda^{-d})$. Therefore, our theory suggests that the parameter $\lambda^{-d}$ plays the same role in infinite dimension as $d_u$ in the classical finite-dimensional setting.
To analyze thresholded estimators in the small lengthscale regime, we use our general non-asymptotic moment bounds and the sharp scaling of sparsity level and expected supremum with lengthscale.


The third contribution of this paper is to demonstrate the advantage of using thresholded covariance estimators within ensemble Kalman filters \cite{evensen2009data}. Our interest in covariance operator estimation was motivated by the widespread use of localization techniques within ensemble Kalman methods in inverse problems and data assimilation, see e.g. \cite{houtekamer2001sequential, houtekamer2016review, farchi2019efficiency,tong2022localized,chen2021auto}. Many inverse problems in medical imaging and the geophysical sciences are most naturally formulated in function space \cite{stuart2010inverse,bui2013computational,bigoni2020data}; likewise, data assimilation is primarily concerned with sequential estimation of spatial fields, e.g. temperature or precipitation \cite{kalnay2003atmospheric,carrassi2018data}. Theoretical insight for these applications calls for sparse covariance estimation theory in function space, which has not been the focus in the literature. Perhaps partly for this reason, the empirical success of localization techniques in ensemble Kalman methods is poorly understood, with few exceptions that study localization in finite dimension \cite{tong2018performance,ghattas2022non}. The work \cite{sanz2023analysis} studies the behavior of ensemble Kalman methods under mesh discretization, but it does not consider localization. In this paper, we use our novel non-asymptotic covariance estimation theory to obtain a sufficient sample size to approximate an idealized mean-field ensemble Kalman filter using a localized ensemble Kalman update. In finite dimension, \cite{ghattas2022non} studies the ensemble approximation of mean-field algorithms for inverse problems and \cite{ghattas2023ensemble} conducts a multi-step analysis of ensemble Kalman filters without localization.

The paper is organized as follows. We first state and discuss our three main theorems in the following section. Then, the next three sections contain the proof of these theorems, along with further auxiliary results of independent interest. We close with conclusions, discussion, and future directions.

{\bf{Notation}}
Given two positive sequences $\{a_n\}$ and $\{b_n\}$, the relation $a_n\lesssim b_n$ denotes that $a_n \le c b_n$ for some constant $c>0$. If the constant $c$ depends on some quantity $\tau$, then we write $a \lesssim_{\tau} b$. If both $a_n \lesssim b_n$ and $b_n \lesssim a_n$ hold simultaneously, then we write $a_n \asymp b_n$. For a finite-dimensional vector $a$, $|a|$ denotes its Euclidean norm. For an operator $\A$, $\|\A\|$ denotes its operator norm, $\A^{*}$ its adjoint, and $\text{Tr}(\A)$ its trace.

\section{Main Results}\label{sec:mainresults}
This section states and discusses the main results of the paper. In Subsection \ref{ssec:MainResult1} we analyze
the thresholded sample covariance estimator 
in a general setting, and establish moment bounds in Theorem \ref{thm:mainresult}. In Subsection \ref{ssec:MainResult2}
we consider a small lengthscale regime, and show in Theorem \ref{thm:smalllengthscale} that the thresholded estimator significantly improves upon the standard sample covariance estimator. 
Finally, in Subsection \ref{ssec:MainResult3} we apply our new covariance estimation theory to demonstrate the advantage of using thresholded covariance estimators within ensemble Kalman filters. 

\subsection{Thresholded Estimation of Covariance Operators}\label{ssec:MainResult1}
Let $u, u_1,u_2, \ldots,u_N $ be \textit{i.i.d.} centered almost surely continuous Gaussian random functions on $D=[0,1]^d$  taking values in $\R$  with covariance function (kernel) $\kk: D \times D \to \R$ and covariance operator $\CC: L^2(D) \to L^2(D),$ so that, for $x,x' \in D$ and $\psi \in L^2(D),$
\[
\kk(x,x'):=\E \Bigl[u(x)u(x')\Bigr], 
\qquad 
(\CC \psi)(\cdot) := \int_D \kk(\cdot,x') \psi (x') \, dx'
.
\]
The sample covariance function $\widehat{\kk}(x,x')$ and sample covariance operator $\widehat{\CC}$ are defined analogously by
\[
\widehat{\kk}(x,x') :=\frac{1}{N}\sum_{n=1}^N u_n(x)u_n(x'),
\qquad 
(\widehat{\CC}\, \psi)(\cdot) := \int_D \widehat{\kk}(\cdot,x')\psi(x')\, dx'
.
\]
We introduce the thresholded sample covariance estimators with thresholding parameter $\rho_N$
\[
\widehat{\kk}_{\rho_N}(x,x'):=\widehat{\kk}(x,x')\mathbf{1}_{\{|\widehat{\kk}(x,x')|\ge \rho_N\}}(x,x'),
\qquad 
(\widehat{\CC}_{\rho_N} \,\psi)(\cdot) := \int_D \widehat{\kk}_{\rho_N}(\cdot,x') \psi(x') \, dx'
,
\]
where $\mathbf{1}_A$ denotes the indicator function of the set $A$.
Our first main result, Theorem \ref{thm:mainresult} below, relies on the following general assumption: 

\begin{assumption}\label{assumption:main1}
  $u, u_1,u_2, \ldots,u_N $ are \textit{i.i.d.} centered almost surely continuous Gaussian random functions on $D=[0,1]^d$ taking values in $\R$ with covariance function $k$. Moreover, the following holds: 
    \begin{enumerate}[label=(\roman*)]
        \item $\sup_{x\in D} \E \big[u(x)^2\big]=1$. \label{assumption:magnitude}
        \item  For some $q \in (0,1)$ and $R_q>0$, $\sup_{x\in D}\left(\int_{D} |\kk(x,x')|^q \,dx'\right)^{\frac{1}{q}} \leq R_q$. \qed \label{assumption:sparsity} 
    \end{enumerate}
\end{assumption}

We assume fully observed functional data and defer extensions to partially observed data \cite{james2000principal,james2003clustering,yao2005functional,yao2005functionalbis,qiao2020doubly,fang2023adaptive} to future work.
Assumption \ref{assumption:main1} (i) normalizes the fields to have unit maximum marginal variance over $D.$  Assumption \ref{assumption:main1} (ii) generalizes standard notions of sparsity in finite dimension to our infinite-dimensional setting  ---refer e.g. to \cite{bickel2008covariance,cai2012optimal,wainwright2019high}, which study estimation of a covariance matrix $\Sigma = (\sigma_{ij}) \in \mathbb{R}^{d_u \times d_u}$ under the row-wise approximate sparsity assumption that $\max_i \sum_{j=1}^{d_u} |\sigma_{ij}|^q \le \widetilde{R}_q^q.$


  Our first main result establishes moment bounds on the 
deviation of the thresholded covariance estimator from its target in terms of the approximate sparsity level $R_q$ and the expected supremum of the field, the latter of which determines the scaling of $\rho_N$. We prove Theorem \ref{thm:mainresult} and several auxiliary results of independent interest in Section \ref{sec:thresholdedestimation}. 

\begin{theorem}\label{thm:mainresult}
Suppose that Assumption \ref{assumption:main1} holds. Let $1\le c_0\le \sqrt{N}$ and set 
\begin{align}
    \rho_N &:= c_0\left[\frac{1}{N}\lor \frac{1}{\sqrt{N}}\E\Big[\sup_{x\in D} u(x)\Big]\lor \frac{1}{N}\Big(\E\Big[\sup_{x\in D} u(x)\Big]\Big)^2\right], \label{eq:thresholdingestimator} \\
    \widehat{\rho}_N &:= c_0\left[ \frac{1}{N}\lor \frac{1}{\sqrt{N}}\Big(\frac{1}{N}\sum_{n=1}^{N}\sup_{x\in D} u_{n}(x)\Big)\lor \frac{1}{N}\Big(\frac{1}{N}\sum_{n=1}^{N}\sup_{x\in D} u_{n}(x)\Big)^2\right]. \label{eq:thresholdparameter}
\end{align}
Then, for any $p\ge 1$,
\begin{equation}\label{eq:in-expectation-bound}
\big[\E \|\widehat{\CC}_{\widehat{\rho}_N}-\CC\|^p\big]^{\frac{1}{p}} \lesssim_p R_q^q\rho_{N}^{1-q}+\rho_Ne^{-\frac{c}{p} N\big(\,\rho_N\land\, \rho_N^2\big)},
\end{equation}
where $c$ is a universal constant.
\end{theorem}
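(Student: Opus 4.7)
The plan is to reduce the operator norm to an integral functional of the kernel via Schur's test, perform a Bickel-Levina-style decomposition on the thresholded kernel difference, and then use Gaussian concentration to control the variance contribution uniformly over $D \times D$.

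\textbf{Step 1: Reduction to a kernel functional.} Since $\widehat{\CC}_{\widehat{\rho}_N}-\CC$ is the integral operator with the symmetric kernel $\widehat{\kk}_{\widehat{\rho}_N}-\kk$, Schur's test yields
\[
\|\widehat{\CC}_{\widehat{\rho}_N}-\CC\| \le \sup_{x\in D} \int_D \bigl|\widehat{\kk}_{\widehat{\rho}_N}(x,x')-\kk(x,x')\bigr|\,dx'.
\]
Bounding moments of the right-hand side will bound the operator-norm moments on the left.

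\textbf{Step 2: Bias-variance split.} I would decompose the integrand using the standard thresholding identity
\[
\widehat{\kk}_{\widehat{\rho}_N}-\kk = \bigl(\widehat{\kk}-\kk\bigr)\mathbf{1}_{|\widehat{\kk}|\ge \widehat{\rho}_N} - \kk\,\mathbf{1}_{|\widehat{\kk}|<\widehat{\rho}_N},
\]
and then split based on whether $|\kk(x,x')|$ exceeds a fixed multiple of $\rho_N$. On $\{x' : |\kk(x,x')| \le \rho_N\}$ the inequality $|\kk| \le |\kk|^q \rho_N^{1-q}$ combined with Assumption \ref{assumption:main1}(ii) yields a purely deterministic contribution of order $R_q^q \rho_N^{1-q}$, which matches the first term in the bound. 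On the complementary set $\{x' : |\kk(x,x')| > \rho_N\}$, Chebyshev-Markov gives $x'$-slice measure at most $R_q^q \rho_N^{-q}$, which allows a uniform sup-norm control to be converted back into an $L^1$ integral bound of the desired form.

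\textbf{Step 3 (main obstacle): Uniform concentration of $\widehat{\kk}-\kk$.} The heart of the proof is an $L^p$-moment bound on
\[
\sup_{(x,x')\in D^2}\bigl|\widehat{\kk}(x,x')-\kk(x,x')\bigr|,
\]
and on the event that the sample kernel falls below the threshold while the true kernel is above. Pointwise, $\widehat{\kk}(x,x')-\kk(x,x') = \tfrac{1}{N}\sum_n \bigl(u_n(x)u_n(x')-\kk(x,x')\bigr)$ is the mean of centered sub-exponential variables, and Bernstein's inequality yields a tail of the form $\exp\bigl(-cN(t^2 \land t)\bigr)$ after normalizing by marginal variances. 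Upgrading to uniformity over $D^2$ requires either generic chaining with the intrinsic Gaussian metric or an envelope-based maximal inequality; the natural envelope is $\sup_{x,x'}|u(x)u(x')| \le \bigl(\sup_x|u(x)|\bigr)^2$, whose first moment is controlled by $\bigl(\E\sup_x u(x)\bigr)^2$ via Borell-TIS. The three-term definition \eqref{eq:thresholdingestimator} of $\rho_N$ is designed precisely so that $N\rho_N$ and $N\rho_N^2$ dominate, in turn, the sub-exponential and sub-Gaussian Bernstein regimes together with the chaining correction scaled by $\E\sup_x u(x)$. Integrating the resulting tail bound against $p t^{p-1}\,dt$ produces the exponential factor $\rho_N \exp\bigl(-\tfrac{c}{p}N(\rho_N \land \rho_N^2)\bigr)$.

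\textbf{Step 4: Empirical threshold.} It remains to replace $\rho_N$ by $\widehat{\rho}_N$. Since each $\sup_{x\in D} u_n(x)$ is a Lipschitz functional of a Gaussian, Borell-TIS and a one-sided union bound show that the sample mean $\frac{1}{N}\sum_n \sup_x u_n(x)$ concentrates around $\E\sup_x u(x)$ at a sub-Gaussian rate, so $\widehat{\rho}_N \asymp \rho_N$ on an event of overwhelming probability. The small-probability complement is absorbed into the exponential term, and the slack provided by $c_0\ge 1$ lets the threshold swap be carried out without altering the form of the bound.
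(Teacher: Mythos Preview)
Your Steps 1, 2, and 4 line up well with the paper's approach: Schur's test, a Bickel--Levina decomposition of $\widehat{\kk}_{\widehat{\rho}_N}-\kk$, and concentration of $\widehat{\rho}_N$ around $\rho_N$ via Gaussian concentration of suprema are exactly the ingredients used. Splitting on the deterministic level $\rho_N$ rather than on the random $\widehat{\rho}_N$ (as the paper does) would actually simplify the analog of the paper's term $I_1$, so that choice is fine.

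The gap is in Step 3, and it is the crux of the argument. You propose to obtain the factor $\rho_N e^{-\frac{c}{p}N(\rho_N\land\rho_N^2)}$ by controlling the \emph{double} supremum $\sup_{(x,x')\in D^2}|\widehat{\kk}-\kk|$ and then ``integrating the resulting tail bound against $p t^{p-1}\,dt$''. That route only yields moment bounds of order $\rho_N^p$, not an exponentially small residual. Concretely: the product-process tail for the double supremum reads, with probability $\ge 1-c_1 e^{-c_2 t}$,
\[
\sup_{x,x'}|\widehat{\kk}-\kk|\lesssim \Bigl(\tfrac{t}{N}\lor\sqrt{\tfrac{t}{N}}\Bigr)\,\E\bigl[\sup_{x}u(x)\bigr]\ \lor\ \tfrac{1}{N}\bigl(\E\bigl[\sup_{x}u(x)\bigr]\bigr)^2 .
\]
Solving for the value of $t$ at which the right-hand side equals $\rho_N$ gives $t\asymp c_0^2$, so $\mathbb{P}\bigl[\sup_{x,x'}|\widehat{\kk}-\kk|\gtrsim\rho_N\bigr]\asymp e^{-c\,c_0^2}$, a \emph{constant}. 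By contrast, the target exponent is $N(\rho_N\land\rho_N^2)$, which in the typical regime equals $c_0^2\bigl(\E\sup_x u(x)\bigr)^2$ and can be arbitrarily large; this discrepancy is exactly what drives the small-lengthscale gain in Theorem~\ref{thm:smalllengthscale}.

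What the paper does instead is freeze one argument. For each fixed $x'\in D$, the deviation $\sup_{x}|\widehat{\kk}(x,x')-\kk(x,x')|$ is a \emph{multiplier} empirical process with i.i.d.\ scalar multipliers $\xi_n=u_n(x')$, for which Mendelson's multiplier bound yields the much stronger, $x'$-uniform tail
\[
\mathbb{P}\Bigl[\sup_{x}|\widehat{\kk}(x,x')-\kk(x,x')|\gtrsim \rho_N\Bigr]\ \lesssim\ e^{-c\,N(\rho_N\land\rho_N^2)} .
\]
The exponential residual then comes from the ``hard'' piece of the decomposition (your variance term on $\{|\kk|\le\rho_N,\ |\widehat{\kk}|\ge\widehat{\rho}_N\}$), bounded via Cauchy--Schwarz as
\[
\Bigl(\E\bigl[\|\widehat{\kk}-\kk\|_{\max}^{2p}\bigr]\Bigr)^{1/2}\Bigl(\int_D \mathbb{P}\bigl[\sup_{x}|\widehat{\kk}(x,x')-\kk(x,x')|\gtrsim \widehat{\rho}_N\bigr]\,dx'\Bigr)^{1/2}\ \lesssim_p\ \rho_N^{\,p}\, e^{-cN(\rho_N\land\rho_N^2)} .
\]
The missing idea, therefore, is this single-supremum multiplier-process bound (Proposition~\ref{prop:single_supremum} in the paper); the double-supremum product-process bound alone cannot produce the exponential term.
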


An appealing feature of Theorem \ref{thm:mainresult} is that it holds for \emph{any} sample size $N \ge 1.$ The following immediate corollary provides a simplified statement which holds for sufficiently large sample size.

 \begin{corollary}\label{corollary:mainresult}
Suppose that Assumption \ref{assumption:main1} holds and that $\sqrt{N} \ge \E\Big[\sup_{x\in D} u(x)\Big] \ge \frac{1}{\sqrt{N}}.$ Set
\begin{align*}
    \rho_N :=\frac{1}{\sqrt{N}}\E\Big[\sup_{x\in D} u(x)\Big],  \qquad 
    \widehat{\rho}_N := \frac{1}{\sqrt{N}}\Big(\frac{1}{N}\sum_{n=1}^{N}\sup_{x\in D} u_{n}(x)\Big). 
\end{align*} 
Then, for any $p\ge 1$,
\begin{equation*}
\big[\E \|\widehat{\CC}_{\widehat{\rho}_N}-\CC\|^p\big]^{\frac{1}{p}} \lesssim_p R_q^q\rho_{N}^{1-q}+\rho_Ne^{-\frac{c}{p} N \rho_N^2},
\end{equation*}
where $c$ is a universal constant.
\end{corollary}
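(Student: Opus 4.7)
The plan is to apply Theorem \ref{thm:mainresult} with $c_0 = 1$ and verify that, under the two-sided bound $1/\sqrt{N} \le \E[\sup_{x \in D} u(x)] \le \sqrt{N}$, the three-way maxima defining $\rho_N$ and $\widehat{\rho}_N$ in Theorem \ref{thm:mainresult} collapse to their middle terms and the minimum $\rho_N \land \rho_N^2$ reduces to $\rho_N^2$. Writing $M := \E[\sup_{x \in D} u(x)]$ for brevity, the derivation reduces to verifying which term in each three-way maximum dominates.

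First, I would show that Theorem \ref{thm:mainresult}'s population threshold equals $M/\sqrt{N}$. The lower bound $M \ge 1/\sqrt{N}$ gives $M/\sqrt{N} \ge 1/N$, so the middle term dominates the first. The upper bound $M \le \sqrt{N}$ gives $M^2/N = (M/\sqrt{N})^2 \le M/\sqrt{N}$, since $M/\sqrt{N} \le 1$, so the middle term also dominates the third. Taking $c_0 = 1$ thus recovers the corollary's $\rho_N$. Moreover, $\rho_N = M/\sqrt{N} \le 1$ gives $\rho_N \land \rho_N^2 = \rho_N^2$, simplifying the exponent in Theorem \ref{thm:mainresult}'s bound.

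The same two comparisons, with the sample mean $S := \frac{1}{N}\sum_{n=1}^N \sup_{x \in D} u_n(x)$ in place of $M$, show that Theorem \ref{thm:mainresult}'s $\widehat{\rho}_N$ equals the corollary's $\widehat{\rho}_N = S/\sqrt{N}$ on the event $\{S \in [1/\sqrt{N}, \sqrt{N}]\}$. On the complementary event the two thresholds may differ, and this is the main technical subtlety. I expect to handle it by noting that $S$ concentrates around $M$ via the Gaussian concentration for suprema already used in the proof of Theorem \ref{thm:mainresult}; under the corollary's hypothesis the complementary event has small probability, and its contribution to the $L^p$ moment can be absorbed into the implicit constants of the $\lesssim_p$ notation.

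Substituting $\rho_N = M/\sqrt{N}$ and $\rho_N \land \rho_N^2 = \rho_N^2$ into the bound $R_q^q\rho_N^{1-q} + \rho_N e^{-(c/p) N(\rho_N \land \rho_N^2)}$ of Theorem \ref{thm:mainresult} yields exactly the statement of the corollary. The only non-routine step is the bookkeeping for $\widehat{\rho}_N$ outside the typical event; everything else is direct substitution.
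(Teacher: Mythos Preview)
Your reduction of $\rho_N$ and of $\rho_N \land \rho_N^2$ is correct and matches what the paper intends. The gap is in your handling of $\widehat{\rho}_N$. You assert that the event $\{S \notin [1/\sqrt{N},\sqrt{N}]\}$ has small probability under the corollary's hypothesis, but this fails at the boundary: if $M = 1/\sqrt{N}$ exactly, then $\mathbb{P}[S < 1/\sqrt{N}]$ is of order $1/2$, since $S-M$ is centered with fluctuations of order $1/\sqrt{N}$ by Lemma~\ref{lemma:concentration-suprema}. So concentration alone cannot make the complementary event negligible, and on that event the two estimators $\widehat{\CC}_{\widehat{\rho}_N^{\,\mathrm{thm}}}$ and $\widehat{\CC}_{\widehat{\rho}_N^{\,\mathrm{cor}}}$ genuinely differ, with no obvious a~priori control on $\|\widehat{\CC}_{\widehat{\rho}_N^{\,\mathrm{cor}}} - \CC\|$ coming from the theorem as a black box.

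The paper calls the corollary ``immediate'' because the \emph{proof} of Theorem~\ref{thm:mainresult}, not merely its statement, carries over verbatim to the simpler threshold $\widehat{\rho}_N = S/\sqrt{N}$. The specific form of $\widehat{\rho}_N$ enters that proof only through Lemma~\ref{lemma:hat_rhoN_rhoN}, and both parts of the lemma hold (with the same or better constants) for $S/\sqrt{N}$ under the hypothesis $M \ge 1/\sqrt{N}$: part~(A) because on the event $\mathcal{E}_t$ one has $S/\sqrt{N} \le \rho_N + t/\sqrt{N}$, and part~(B) because $\mathbb{P}[S/\sqrt{N} < t\rho_N] = \mathbb{P}[S-M < -(1-t)M]$ is precisely the bound obtained in Case~2 of that lemma's proof. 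With Lemma~\ref{lemma:hat_rhoN_rhoN} in hand for the corollary's $\widehat{\rho}_N$, the rest of the proof of Theorem~\ref{thm:mainresult} is pure substitution. Your black-box route stumbles exactly because the theorem's and the corollary's thresholds define \emph{different} random estimators; working at the level of the proof sidesteps this entirely.
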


 To the best of our knowledge, Theorem \ref{thm:mainresult} and Corollary \ref{corollary:mainresult} are the first results in the literature to consider covariance operator estimation under the natural sparsity Assumption~\ref{assumption:main1} (ii). 
  As will be discussed next, the first of the two terms in the right-hand side of \eqref{eq:in-expectation-bound} is reminiscent of existing results for covariance matrix estimation. The second term in \eqref{eq:in-expectation-bound} depends only on the expected supremum of the field, and, as we will show in Subsection \ref{ssec:MainResult2}, it is negligible in the small lengthscale regime.
  
For covariance matrix estimation under $\ell_q$-sparsity, \cite[Theorem 6.27]{wainwright2019high} proves that if the sample covariance matrix satisfies $|\widehat{\Sigma}_{ij} - \Sigma_{ij}| \lesssim \widetilde{\rho}_N$ for all $1 \le i,j\le d_u,$ then the error of an estimator with thresholding parameter $\widetilde{\rho}_N$ can be bounded by $\widetilde{R}_q^q \widetilde{\rho}_N^{1-q},$ where $\widetilde{R}_q$ is a quantity analogous to our $R_q$ that controls the row-wise $\ell_q$-sparsity of $\Sigma$. This explains the choice of thresholding parameter 
$$\widetilde{\rho}_N \asymp \frac{1}{\sqrt{N}}  \sqrt{ \log d_u} \asymp \frac{1}{\sqrt{N}} \mathbb{E} \insquare{ \max_{1 \le i \le d_u}u_i }$$
in finite dimension, which ensures an entry-wise control on the sample covariance matrix with high probability. Analogously, our infinite-dimensional theory relies on sup-norm bounds for the sample covariance function $\widehat{k}(x,x');$  we obtain these bounds in Subsection \ref{ssec:covariancefunction} using tools from  empirical process theory. For instance, Proposition \ref{prop:single_supremum} shows that with our choice of thresholding parameter $\rho_N$, we have $\sup_{x \in D}| \widehat{k}(x,x') - k(x,x')| \lesssim \rho_N$ with high probability.   
Therefore, Theorem \ref{thm:mainresult} and Corollary \ref{corollary:mainresult} reveal that the expected supremum is the key dimension-free quantity that determines the choice of thresholding parameter and the error of estimation in both finite and infinite-dimensional settings.  Since in practice the expected supremum of the field (and hence $\rho_N$) is unknown, we replace it with $\widehat{\rho}_N$ to define a computable thresholded estimator $\widehat{\mathcal{C}}_{\widehat{\rho}_N}$. The concentration of $\widehat{\rho}_N$  around $\rho_N$ is established in Lemma \ref{lemma:hat_rhoN_rhoN}. 



\begin{remark}
  In contrast to existing results in the finite-dimensional setting (see e.g. \cite{bickel2008covariance, cai2012optimal,wainwright2019high}) that provide in-probability bounds or moment bounds of order up to $p=2$, Theorem \ref{thm:mainresult} provides moment bounds for all $p \ge 1$. For example,  \cite[Theorem 6.27]{wainwright2019high} shows a high-probability statement where $\widetilde{\rho}_N$ necessarily depends on the desired confidence level. Consequently, \cite[Theorem 6.27]{wainwright2019high} cannot be used to derive moment bounds of arbitrary order. In contrast, Theorem~\ref{thm:mainresult} shows that the tuning parameter of the covariance operator estimator need not be tied to the confidence level. The proof technique therefore contributes to the literature on confidence parameter independent estimators; see e.g. \cite{bellec2018slope} for an analogous finding that, contrary to standard practice \cite{bickel2009simultaneous}, the Lasso tuning parameter need not depend on the confidence level.
\end{remark}

\begin{remark}
    The proof of the small lengthscale results in Subsections \ref{ssec:MainResult2} and \ref{ssec:MainResult3} utilizes Theorem \ref{thm:mainresult} with a careful choice of thresholding parameter prefactor $c_0$. However, the exponential improvement in sample complexity established in Theorems \ref{thm:smalllengthscale} and \ref{thm:LEKIExpectation} holds for any fixed value $c_0\gtrsim 1.$
    As noted in \cite[Section 3]{bickel2008covariance} and \cite[Section 4]{cai2011adaptive}, establishing an optimal choice of prefactor $c_0$ is challenging even in the simpler setting of covariance matrix estimation, where $c_0$ is often taken as a fixed constant or chosen empirically through cross-validation \cite{bickel2008covariance,cai2011adaptive,cai2012adaptive}.
   We will numerically showcase in Subsection \ref{ssec:MainResult2} the exponential improvement of a thresholded estimator with the choice $c_0=5.$    \qed
\end{remark}

\begin{remark}
As in the finite-dimensional setting \cite{cai2012optimal, el2008operator}, our thresholded estimator $\widehat{\CC}_{\widehat{\rho}_N}$ is positive semi-definite with high probability, but it is not guaranteed to be positive semi-definite. Fortunately, a simple modification ensures positive semi-definiteness while maintaining the same order of estimation error achieved by the original estimator. Notice that $\widehat{\CC}_{\widehat{\rho}_N}$ is a self-adjoint and Hilbert-Schmidt operator since $\int_{D\times D} \big|\widehat{\kk}_{\rho_N}(x,x')\big|^2 dxdx'<\infty$, see \cite[Example 9.23]{hunter2001applied}. Therefore, there is an orthonormal basis $\{\phi_i\}_{i=1}^{\infty}$ of $L^2(D)$ consisting of eigenfunctions of $\widehat{\CC}_{\widehat{\rho}_N}$ such that $\widehat{\kk}_{\rho_N}(x,x')=\sum_{i=1}^{\infty}\widehat{\lambda}_i \phi_i(x)\phi_i(x'),$  where $\widehat{\lambda}_i$ is the $i$-th eigenvalue of $\widehat{\CC}_{\widehat{\rho}_N}$. Let $\widehat{\lambda}_i^{+}=\widehat{\lambda}_i \lor 0$ be the positive part of $\widehat{\lambda}_i$ and define
\[
\widehat{\kk}_{\rho_N}^{+}(x,x'):=\sum_{i=1}^{\infty}\widehat{\lambda}_i^{+} \phi_i(x)\phi_i(x'),
\qquad 
(\widehat{\CC}_{\rho_N}^{+} \,\psi)(\cdot) := \int_D \widehat{\kk}_{\rho_N}^{+}(\cdot,x') \psi(x') \, dx' .
\]
Then, $\widehat{\CC}_{\rho_N}^{+}$ is positive semi-definite and further
\begin{align*}
\|\widehat{\CC}_{\rho_N}^{+}-\CC\|&\le \|\widehat{\CC}_{\rho_N}^{+}-\widehat{\CC}_{\rho_N}\|+\|\widehat{\CC}_{\rho_N}-\CC\|\le \max_{i:\widehat{\lambda}_i\le 0} |\widehat{\lambda}_i|+ \|\widehat{\CC}_{\rho_N}-\CC\|\\
&\le\max_{i:\widehat{\lambda}_i\le 0} |\widehat{\lambda}_i-\lambda_i|+ \|\widehat{\CC}_{\rho_N}-\CC\|\le 2\|\widehat{\CC}_{\rho_N}-\CC\|,
\end{align*}
where $\lambda_i$ is the $i$-th eigenvalue of $\CC$. Thus, $\widehat{\CC}_{\rho_N}^{+}$ is positive semi-definite and attains the same estimation error as the original thresholded estimator $\widehat{\CC}_{\rho_N}$. In light of this fact, we will henceforth assume that $\widehat{\CC}_{\rho_N}$ is positive semi-definite wherever needed. \qed
\end{remark}

\subsection{Small Lengthscale Regime}\label{ssec:MainResult2}

Our second main result, Theorem \ref{thm:smalllengthscale},  shows that in the small lengthscale regime thresholded estimators enjoy an exponential improvement in sample complexity relative to the sample covariance estimator. To formalize this regime, we introduce the following additional assumption:
\begin{assumption}\label{assumption:main2}
    The following holds: 
        \begin{enumerate}[label=(\roman*)]
        \item 
        The covariance function $k$ is isotropic and positive, so that $k(x,x')=k(|x-x'|)>0$. Moreover, $k(r)$ is differentiable, strictly decreasing on 
        $[0, \infty),$ and satisfies $k(r)\to 0$ as $r\to\infty.$ \label{assumption:fieldandkernel}
        \item  The kernel $k=k_{\lambda}$ depends on a correlation lengthscale parameter $\lambda>0$ such that $k_{\lambda}(\alpha r)=k_{\lambda\alpha^{-1}}(r)$ for any $\alpha>0$, and $k_{\lambda}(0)=k(0)$ is independent of $\lambda.$  \qed \label{assumption:smalllengthscalelimit}
    \end{enumerate}
\end{assumption}

Assumption \ref{assumption:main2} (i) requires isotropy of the covariance kernel on $D;$ this assumption, while restrictive, is often invoked in applications \cite{williams2006gaussian,stein2012interpolation}.  Assumption \ref{assumption:main2} (ii) makes explicit the dependence of the kernel on the correlation lengthscale parameter $\lambda.$ As discussed later,
the nonparametric Assumption \ref{assumption:main2} is satisfied by important parametric covariance functions, such as squared exponential and Matérn models. 
 The \textit{small lengthscale regime} holds whenever the underlying covariance function satisfies Assumption~\ref{assumption:main2} and $\lambda$ is sufficiently small. In the scientific applications that motivate our work, the dimension of the physical space is small ($d= 1,2,3$). Hence, we will treat $d$ as a constant in our analysis of the small lenghtscale regime. Theorem \ref{thm:smalllengthscale} compares the errors of sample and thresholded covariance estimators.
The proof can be found in Section \ref{sec:smalllengthscale}.

\begin{theorem}\label{thm:smalllengthscale}
Suppose that Assumptions \ref{assumption:main1} and \ref{assumption:main2} hold. Let $c_0\gtrsim 1$ be an absolute constant and set
\[
\widehat{\rho}_N
:=
\frac{c_0}{\sqrt{N}}\Big(\frac{1}{N}\sum_{n=1}^{N}\sup_{x\in D} u_{n}(x)\Big).
\]
There is a universal constant $\lambda_0>0$ such that for $\lambda< \lambda_0$ and $N\gtrsim \log (\lambda^{-d})$, the sample covariance estimator and the thresholded covariance estimator satisfy
\begin{align}
    \frac{\mathbb{E} \| \widehat{\CC} - \CC \|}{\|\CC\|} & \asymp \bigg(\sqrt{\frac{\lambda^{-d}}{N}} \lor \frac{\lambda^{-d}}{N}\bigg), \label{eq:smalllengthscalewithoutthresholding} \\
    \frac{\mathbb{E} \| \widehat{\CC}_{\widehat{\rho}_N} - \CC \|}{\|\CC\|} &\le \,c(q)\bigg(\frac{\log(\lambda^{-d})}{N}\bigg)^{\frac{1-q}{2}},
\label{eq:smalllengthscalewiththresholding}
\end{align}
where $c(q)$ is a constant that depends only on $q$.
\end{theorem}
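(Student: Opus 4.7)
The plan is to reduce both bounds to the results of Subsection~\ref{ssec:MainResult1} by quantifying how three intrinsic quantities of the field scale with the lengthscale $\lambda$: the operator norm $\|\CC\|$, the effective rank $r(\CC) := \Tr(\CC)/\|\CC\|$, the sparsity level $R_q$, and the expected supremum $\E[\sup_{x \in D} u(x)]$. Throughout I will use the scaling relation $k_\lambda(r) = k_1(r/\lambda)$ implied by Assumption~\ref{assumption:main2}(ii), which yields a universal change-of-variables trick $\int_{\R^d} f(k_\lambda(|y|))\,dy = \lambda^d \int_{\R^d} f(k_1(|z|))\,dz$ for any $f$, provided the integrals converge.

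First I would establish the scaling dictionary. Assumption~\ref{assumption:main1}(i) and Assumption~\ref{assumption:main2}(ii) together give $k(0)=1$, so $\Tr(\CC) = \int_D k(0)\,dx = 1$ is $\lambda$-independent. For $\|\CC\|$, I would sandwich it: an upper bound comes from $\|\CC\| \le \sup_x \int_D k(x,x')\,dx' \le \int_{\R^d} k_\lambda(|y|)\,dy = \lambda^d \int_{\R^d} k_1(|z|)\,dz$; a matching lower bound follows by plugging a test function supported on a ball of radius $\asymp \lambda$ centered at an interior point of $D$. Hence $\|\CC\| \asymp \lambda^d$ and $r(\CC) \asymp \lambda^{-d}$. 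The same change of variables gives $R_q^q \lesssim \lambda^d$, so $R_q^q/\|\CC\| \lesssim 1$ is $\lambda$-free. For the expected supremum, partitioning $D$ into $\asymp \lambda^{-d}$ boxes of sidelength $\asymp \lambda$ produces values that decorrelate by Assumption~\ref{assumption:fieldandkernel}, and a standard metric-entropy / Sudakov-Dudley argument (using the intrinsic metric $d(x,x')=\sqrt{2(1-k_\lambda(|x-x'|))}$, whose covering number at scale $1$ on $D$ is $\asymp \lambda^{-d}$) yields $\E[\sup_{x\in D} u(x)] \asymp \sqrt{\log(\lambda^{-d})}$.

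For \eqref{eq:smalllengthscalewithoutthresholding}, I would invoke a Koltchinskii--Lounici-type two-sided bound for the sample covariance of a Gaussian element in a separable Hilbert space, which gives $\E\|\widehat{\CC}-\CC\| \asymp \|\CC\| \bigl(\sqrt{r(\CC)/N}\lor r(\CC)/N\bigr)$. Substituting $\|\CC\| \asymp \lambda^d$ and $r(\CC)\asymp \lambda^{-d}$ and dividing by $\|\CC\|$ gives \eqref{eq:smalllengthscalewithoutthresholding} directly. For \eqref{eq:smalllengthscalewiththresholding}, I would apply Corollary~\ref{corollary:mainresult} with $p=1$ (the hypothesis $\sqrt{N} \ge \E[\sup u] \ge 1/\sqrt{N}$ is ensured by $N \gtrsim \log(\lambda^{-d})$ and $\lambda<\lambda_0$). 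With the scalings just computed, $\rho_N \asymp \sqrt{\log(\lambda^{-d})/N}$, so the bias-type term satisfies
\[
\frac{R_q^q \rho_N^{1-q}}{\|\CC\|} \lesssim \frac{\lambda^d}{\lambda^d}\bigg(\frac{\log(\lambda^{-d})}{N}\bigg)^{\frac{1-q}{2}} = \bigg(\frac{\log(\lambda^{-d})}{N}\bigg)^{\frac{1-q}{2}}.
\]
For the remainder term $\rho_N e^{-cN\rho_N^2/p}$, we have $N\rho_N^2 \asymp c_0^2 \log(\lambda^{-d})$, so that after dividing by $\|\CC\|\asymp \lambda^d$ this contributes $\lesssim \sqrt{\log(\lambda^{-d})/N}\cdot \lambda^{(cc_0^2-1)d}$, which is absorbed into the bias term once $c_0\gtrsim 1$ (and $\lambda<\lambda_0$ small enough).

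The main obstacle will be making the auxiliary scaling estimates sharp and uniform in $\lambda$: in particular, obtaining matching two-sided bounds for $\|\CC\|$ and $\E[\sup u]$ that hold for all sufficiently small $\lambda$ with absolute constants depending only on the universal profile $k_1$. Once these dimension-free scalings are in hand, the rest is bookkeeping through Corollary~\ref{corollary:mainresult} and the Koltchinskii--Lounici bound. I would also verify that the data-driven $\widehat{\rho}_N$ concentrates around its population counterpart at the required rate via the lemma alluded to after Corollary~\ref{corollary:mainresult}, so that the same bound transfers from the idealized $\rho_N$ to the computable $\widehat{\rho}_N$ appearing in the theorem.
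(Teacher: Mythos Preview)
Your proposal is correct and follows essentially the same approach as the paper: establish the $\lambda$-scalings of $\|\CC\|$, $r(\CC)$, $R_q^q$, and $\E[\sup_{x\in D} u(x)]$, then plug into the Koltchinskii--Lounici bound for \eqref{eq:smalllengthscalewithoutthresholding} and into Theorem~\ref{thm:mainresult}/Corollary~\ref{corollary:mainresult} for \eqref{eq:smalllengthscalewiththresholding}. Two minor remarks: the paper obtains the lower bound $\|\CC\|\gtrsim \lambda^d$ with the constant test function $\psi\equiv 1$ rather than a localized bump (both work), and your final step of ``transferring from $\rho_N$ to $\widehat{\rho}_N$'' is unnecessary since Corollary~\ref{corollary:mainresult} already controls $\widehat{\CC}_{\widehat{\rho}_N}$ directly.
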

\begin{remark} The term $c(q)$ in \eqref{eq:smalllengthscalewiththresholding} admits a form
    \[c(q) \asymp \frac{\int_0^{\infty} k_1(r)^q r^{d-1}dr}{\int_0^{\infty} k_1(r)r^{d-1}dr},
    \]
    where $k_1(r)$ is the kernel function with correlation lengthscale parameter $\lambda=1$ in Assumption \ref{assumption:main2}. As an explicit example, for the squared exponential kernel defined in~\eqref{eq:SEandMaterncovariance}, we have $k_1(r) = e^{-r^2/2}$ and a straightforward calculation shows that $c(q) \asymp q^{-d/2}$.
    \qed 
\end{remark}

Theorem \ref{thm:smalllengthscale} shows that, for sufficiently small $\lambda,$ we need $N \gtrsim \lambda^{-d}$ samples to control the relative error of the sample covariance estimator, while $N \gtrsim \log (\lambda^{-d})$ samples suffice to control the relative error of the thresholded estimator. The error bound in \eqref{eq:smalllengthscalewiththresholding} is reminiscent of the convergence rate $s_0\big(\frac{\log d_u }{N}\big)^{(1-q)/2}$ of thresholded estimators for $\ell_q$-sparse matrices $\Sigma \in \R^{d_u \times d_u}$   with sparsity level $s_0$  \cite{bickel2008covariance, cai2012optimal}. Therefore, Theorem \ref{thm:smalllengthscale} indicates that, in our infinite-dimensional setting, the parameter $\lambda^{-d}$ plays an analogous role to $d_u$ and  $c(q)$  plays an analogous role to $s_0$. However, we remark that the estimation error in Theorem \ref{thm:smalllengthscale} is \emph{relative error}, whereas in the finite-dimensional covariance matrix estimation literature \cite{bickel2008covariance, cai2012optimal, cai2011adaptive}, the estimation error is often \emph{absolute error}. While in the finite-dimensional setting the sparsity parameter $s_0$ may increase with $d_u,$ the constant $c(q)$ in our bound \eqref{eq:smalllengthscalewiththresholding} is independent of the lengthscale parameter $\lambda$. Moreover, inspired by the minimax optimality of thresholded estimators for $\ell_q$-sparse covariance matrix estimation \cite{cai2012optimal}, we conjecture that the convergence rate \eqref{eq:smalllengthscalewiththresholding} is also minimax optimal, and we intend to investigate this question in future work. 

The bound \eqref{eq:smalllengthscalewithoutthresholding} for the sample covariance estimator relies on the seminal work \cite{koltchinskii2017concentration}, which shows that, for any sample size $N,$
    \begin{align}\label{eq:Koltchinksiibound}
         \frac{\mathbb{E} \| \widehat{\mathcal{C}} - \mathcal{C} \|}{\|\mathcal{C} \| }  \asymp  \sqrt{\frac{r(\mathcal{C})}{N} }   \lor  \frac{r(\mathcal{C})}{N}, 
         \qquad  
         r(\mathcal{C}) := \frac{\text{Tr}(\mathcal{C})}{\| \mathcal{C}\|}.
    \end{align}
Consequently, \eqref{eq:smalllengthscalewithoutthresholding} follows by a sharp characterization of the operator norm and the trace of $\CC$ in terms of $\lambda$. In contrast, the bound \eqref{eq:smalllengthscalewiththresholding} for the thresholded estimator relies on our new Theorem~\ref{thm:mainresult}, and requires an analogous characterization of the thresholding parameter $\rho_N$ and approximate sparsity level $R_q$ in terms of $\lambda$. 

In the remainder of this subsection, we illustrate Theorem \ref{thm:smalllengthscale} with a simple numerical experiment where we consider the estimation of covariance operators for squared exponential  (SE) and Mat\'ern  (Ma)  models in dimensions $d=1$ and $d=2$ at small lengthscales.
We emphasize that our theory is developed under mild nonparametric assumptions on the covariance kernel
 as outlined in Assumption~\ref{assumption:main2}; 
however, for simplicity here we focus on two important parametric models.  
For $x,x' \in D$, define the corresponding covariance functions
\begin{align}\label{eq:SEandMaterncovariance}
k^{\mathrm{SE}}_\lambda(x, x') &:=\exp \left(-\frac{|x-x'|^2}{2\lambda^2}\right), \\
k^{\mathrm{Ma}}_\lambda(x, x') &:=\frac{2^{1-\nu}}{\Gamma(\nu)}\left(\frac{\sqrt{2\nu}}{\lambda}|x-x'|\right)^\nu K_\nu\left(\frac{\sqrt{2\nu}}{\lambda}|x-x'|\right),\label{eq:Matern}
\end{align}
where $\Gamma$ denotes the Gamma function and $K_\nu$ denotes the modified Bessel function of the second kind. In both cases, the parameter $\lambda$ is interpreted as the correlation lengthscale of the field and Assumption \ref{assumption:main2}  is satisfied. Moreover, Assumption \ref{assumption:main1} is satisfied by the squared exponential model, and it is satisfied by the Mat\'ern model provided that the smoothness parameter $\nu$ satisfies $\nu>(\frac{d-1}{2}\lor \frac{1}{2})$. We refer to \cite[Lemma 4.2]{sanz2022unlabeled} for the almost sure continuity of random samples and to \cite[Appendix 3, Lemma 11]{nobile2015multi} for the H\"older continuity of the Mat\'ern covariance function $k^{\mathrm{Ma}}(r)$. For the Mat\'ern model, we take the smoothness parameter to be $\nu=3/2$ in our experiments.

We will report results in physical dimension $d=1$ and $d=2.$ To respectively resolve small lengthscales up to order $\lambda \asymp 10^{-3}$ and $\lambda \asymp 10^{-2},$ we discretize the domain $D=[0,1]$ with a mesh of $L=1250$ uniformly spaced points and the domain $D=[0,1]^2$ with $L = 10,000$ points.
In the $d=1$ case we consider a total of $30$ lengthscales arranged uniformly in log-space and ranging from $10^{-3}$ to $10^{-0.1}$, and in the $d=2$ case we consider a total of $10$ lengthscales arranged  in log-space and ranging from $10^{-2.3}$ to $10^{-0.1}$.
For each lengthscale $\lambda$, with corresponding covariance operator $\CC$, the discretized covariance operators are given by the $L\times L$ covariance matrices
\begin{align*}
\CC^{ij}:=k(x_i,x_j), 
\qquad 
1\leq i,j\leq L,
\end{align*}
and we sample $N = 5\log(\lambda^{-1})$ realizations of a Gaussian process on the mesh, denoted $u_1,\ldots u_N \sim \Nc(0,\CC)$. We then compute the empirical and thresholded sample covariance matrices 
\begin{align*}
    \widehat{\CC}^{ij}:=\frac{1}{N}\sum_{n=1}^N u_{n}(x_i)u_{n}(x_j),  \qquad \widehat{\CC}_{\widehat{\rho}_N}^{\,ij}:=\widehat{\CC}^{ij}\mathbf{1}_{\{|\widehat{\CC}^{ij}|\geq \widehat{\rho}_N\}}, 
    \qquad 1\leq i,j\leq L,
\end{align*}
scaling the thresholding level $\widehat{\rho}_N$ as described in Theorem \ref{thm:mainresult}.

\begin{figure}[h]
\centering
\includegraphics[scale=.369]{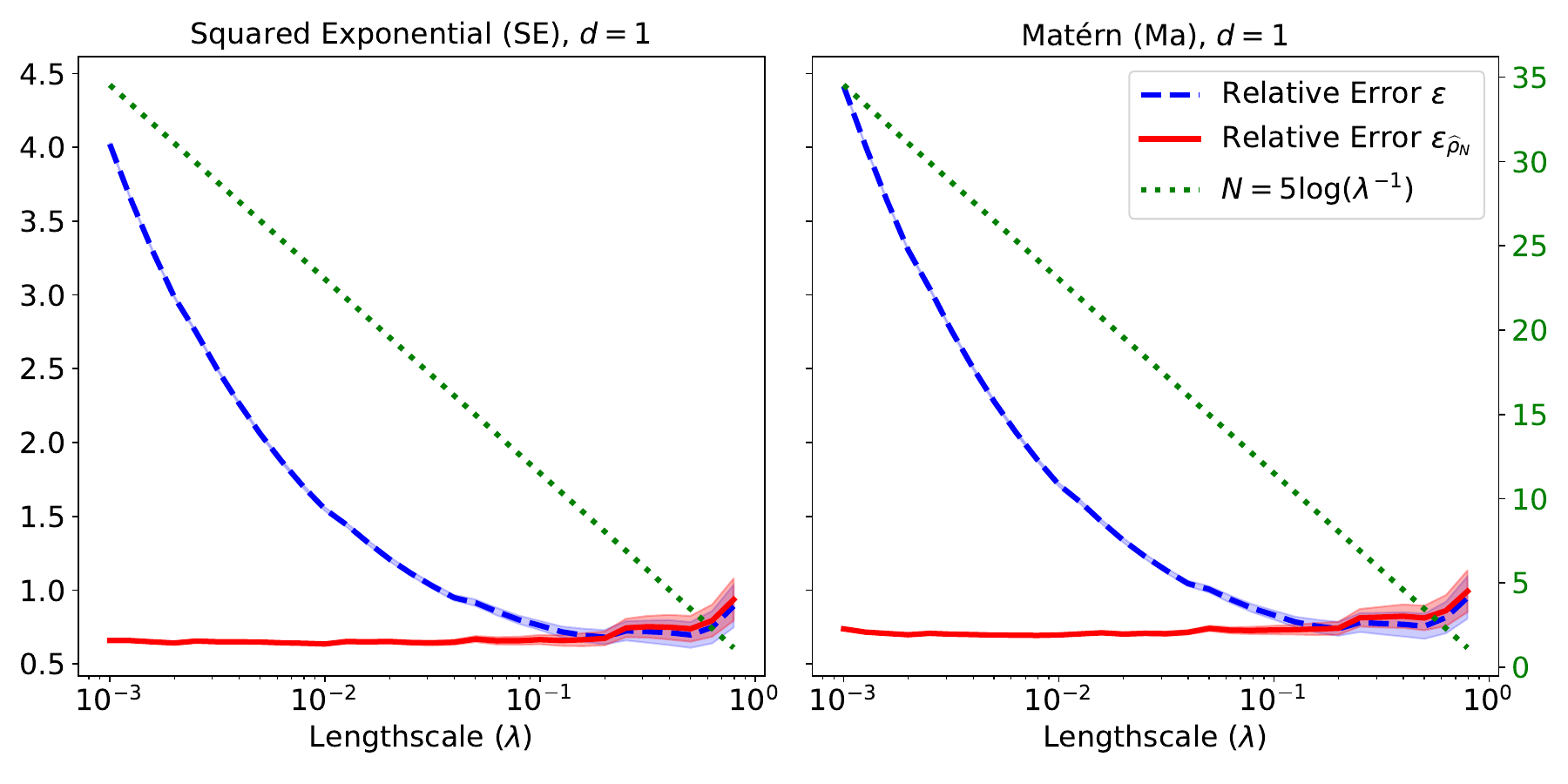}
\caption{Plots of the average relative errors and 95\% confidence intervals achieved by the sample ($\varepsilon$, dashed blue) and thresholded ($\varepsilon_{\widehat{\rho}_N}$, solid red) covariance estimators based on sample size ($N$, dotted green) for the squared exponential kernel (left) and Mat\'ern kernel (right) in $d=1$ over 100 trials.}
\label{fig:RelativeErrors}
\end{figure}

To quantify the performance of each of the estimators, we compute their relative errors 
\begin{align*}
     \eps:=\frac{\|\widehat{\CC}-\CC\|}{\|\CC\|}, 
     \qquad 
     \eps_{\widehat{\rho}_N}:=\frac{\|\widehat{\CC}_{\widehat{\rho}_N}-\CC\|}{\|\CC\|}.
 \end{align*}

The experiment is repeated a total of 100 times for each lengthscale in the case $d=1$ and 30 times for each lengthscale in the case $d=2$. In Figure~\ref{fig:RelativeErrors}, we plot average relative errors as well as 95\% confidence intervals over the 100 trials for both squared exponential and Mat\'ern models in $d=1$, along with the sample size for each lengthscale setting. In Figure~\ref{fig:RelativeErrorsd2}, we present the $d=2$ analog of Figure~\ref{fig:RelativeErrors}. Our theoretical results are clearly illustrated: taking only $N =  5 \log(\lambda^{-d})$ samples, the relative error in the thresholded estimator
 remains constant as the lengthscale decreases, whereas the relative error in the sample covariance operator diverges. Notice that Figures~\ref{fig:RelativeErrors} and~\ref{fig:RelativeErrorsd2} also show that thresholding can increase the relative error for fields with large correlation lengthscale. 

\begin{figure}[h]
\centering
\includegraphics[scale=.369]{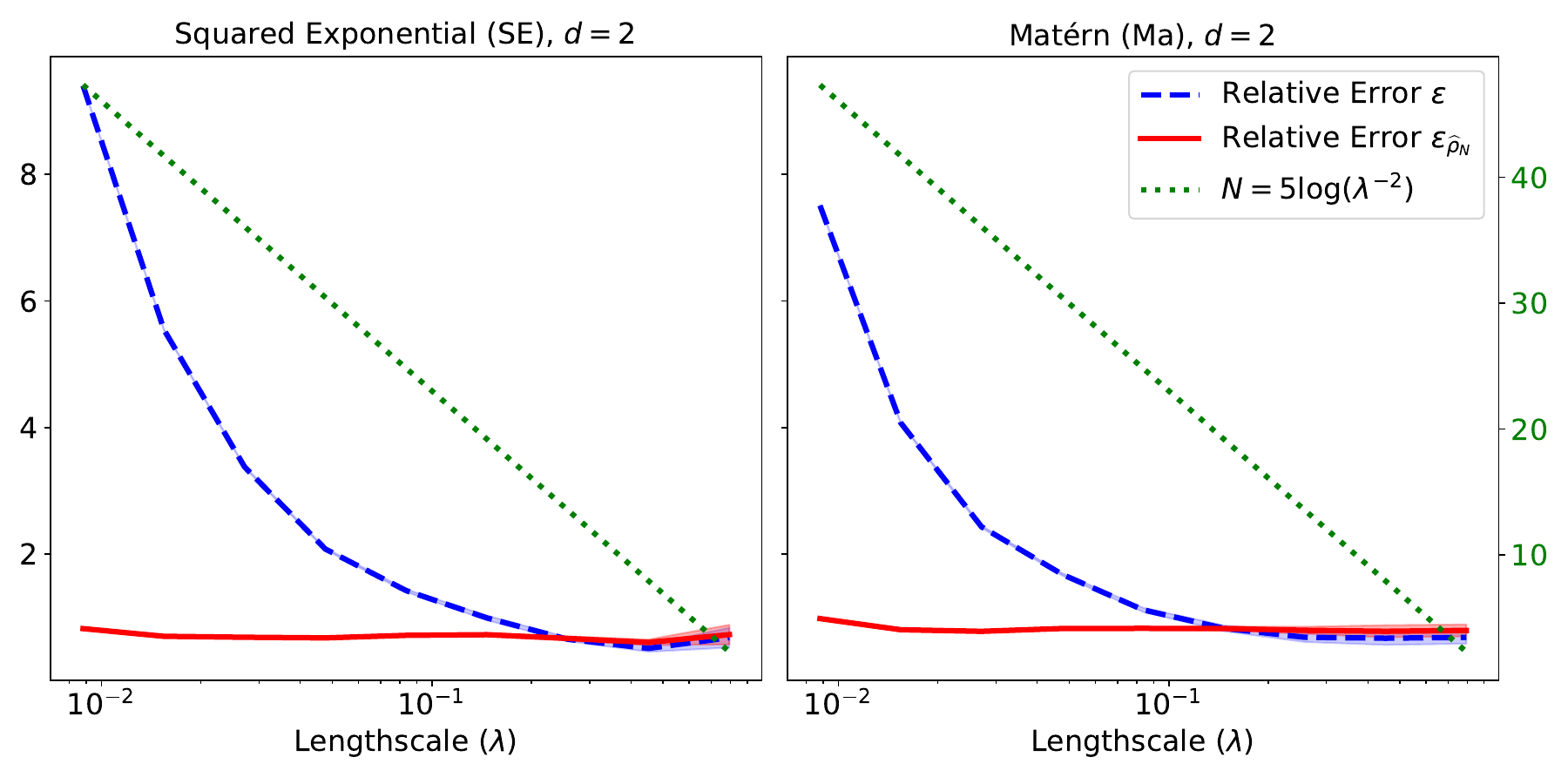}
\caption{Plots of the average relative errors and 95\% confidence intervals achieved by the sample ($\varepsilon$, dashed blue) and thresholded ($\varepsilon_{\widehat{\rho}_N}$, solid red) covariance estimators based on sample size ($N$, dotted green) for the squared exponential kernel (left) and Matérn kernel (right) in $d=2$ over 30 trials.
}
\label{fig:RelativeErrorsd2}
\end{figure}

\subsection{Application in Ensemble Kalman Filters}\label{ssec:MainResult3}
Nonlinear filtering is concerned with online estimation of the state of a dynamical system from partial and noisy observations. Filtering algorithms blend the dynamics and observations by sequentially solving inverse problems of the form
\begin{equation}\label{eq:IP}
y = \mcA u + \eta,
\end{equation}
 where $y \in \R^{d_y}$ denotes the observation, $u \in L^2(D)$ denotes the state, $\mcA: L^2(D) \to \R^{d_y}$ is a linear observation operator, and $\eta \sim  \mcN(0, \Gamma)$ is the observation error with positive definite covariance matrix $\Gamma$. In Bayesian filtering \cite{sanzstuarttaeb}, the model dynamics define a prior or \emph{forecast} distribution on the state, which is combined with the data likelihood implied by the observation model \eqref{eq:IP} to obtain a posterior or \emph{analysis} distribution. In most applications, the update from forecast to analysis distribution must be implemented through an approximate filtering algorithm. For instance, in operational numerical weather forecasting where the state may represent a temperature field along the surface of the Earth, discretizations of size $10^9$ are routinely used to capture small lengthscales on the order of kilometers. In this setting, computing exactly the Kalman formulas that define the forecast-to-analysis update would be unfeasible.      
 
Ensemble Kalman filters (EnKFs) are a rich family of algorithms scalable to highly complex data assimilation tasks \cite{evensen2009data}, including operational numerical weather forecasting \cite{houtekamer2016review}. The key idea behind these methods is to represent forecast and analysis distributions using an ensemble of $N$ particles, so that the computational cost is controlled by the number of particles, which is typically small, rather than by the level of discretization.  For instance, in operational weather forecasting $N \sim 10^2 \ll 10^9;$ we refer to \cite{tippett2003ensemble} for a summary of the computational and memory costs of different EnKFs in terms of the discretization level and the number of particles. Taking as input a forecast ensemble $\{ u_n \}_{n=1}^N \iid \mcN(0, \CC)$ 
 and observed data $y$ generated according to \eqref{eq:IP}, EnKFs produce an analysis ensemble $\{\upost_n\}_{n=1}^N$. 
Each analysis particle $\upost_n$ is obtained by nudging a forecast particle $u_n$ towards the observed data $y.$ The amount of nudging is controlled by a \emph{Kalman gain} operator to be estimated using the first two moments of the forecast ensemble. 
Vanilla implementations of EnKFs rely on the sample covariance, see e.g. \cite[Algorithm 10.2]{sanzstuarttaeb}. However, some form of covariance localization is required for EnKFs to scale to operational settings \cite{houtekamer2001sequential}. While the use of localization within EnKFs is standard, few works have demonstrated its statistical benefit \cite{tong2018performance,ghattas2022non}, and none in the functional setting that is most relevant in applications.  In this subsection we show that thresholded covariance operator estimators within the EnKF analysis step can dramatically reduce the ensemble size required to approximate an idealized, non-implementable, \emph{mean-field} EnKF that uses the population moments of the forecast distribution. Consequently, we identify an ensemble size which suffices for each EnKF particle to be updated similarly as in the limit of infinite number of particles. We refer to \cite{herty2019kinetic,calvello2022ensemble} for recent works that study the behavior of EnKFs in the mean-field limit. 


Define the mean-field EnKF analysis update by 
\begin{align}\label{eq:MFEnKF}
    \upost_n^\star 
    &:=\upr_n+ \msK(\CC ) \bigl(y  - \mcA\upr_n- \eta_n\bigr), \qquad 1\le  n \le N,
\end{align}
where $\{ \eta_n \}_{n=1}^N \iid \mathcal{N}(0, \Gamma)$ and
\begin{align}\label{eq:Kalman_gain_operator}
\msK(\CC):=\CC \A^{*}(\A\CC\A^{*}+\Gamma)^{-1}
\end{align}
denotes the Kalman gain. Practical algorithms do not have access to the forecast distribution, and rely instead on the forecast ensemble to estimate both $\CC$ and $\msK$. 
We will investigate two popular analysis steps, given by
\begin{alignat}{3}
    \upost_n
    &:=\upr_n+ \msK(\widehat{\CC}) \bigl(y  - \mcA\upr_n- \eta_n\bigr), \qquad &&1\le  n \le N, \label{eq:EnKF}\\
    \upost_n^{\,\rho}
    &:=\upr_n+ \msK(\widehat{\CC}_{\rho_N}) \bigl(y  - \mcA\upr_n- \eta_n\bigr),\ \qquad  &&1\le  n \le N. \label{eq:localEnKF}
\end{alignat}
The analysis step in \eqref{eq:EnKF} is known as the \emph{perturbed observation} or \emph{stochastic} EnKF \cite{burgers1998analysis}. For simplicity of exposition, we will assume here that when updating $u_n,$ this particle is not included in the  sample covariance $\widehat{\CC}$ used to define the Kalman gain. This slight modification of the sample covariance will facilitate a cleaner statement and proof of our main result, Theorem \ref{thm:LEKIExpectation}, without altering the qualitative behavior of the algorithm. The analysis step in \eqref{eq:localEnKF} is based on a thresholded covariance operator estimator. Again, we assume that the thresholded estimator $\widehat{\CC}_{\rho_N}$ is defined without using the particle $u_n.$
The following result is a direct consequence of our theory on covariance operator estimation in the small lengthscale regime. The proof can be found in Section \ref{sec:ensembleKalman}.

\begin{theorem}[Approximation of Mean-Field EnKF]\label{thm:LEKIExpectation}
    Suppose that Assumptions \ref{assumption:main1} and \ref{assumption:main2} hold. Let $y$ be generated according to \eqref{eq:IP} with bounded observation operator $\mcA: L^2(D) \to \R^{d_y}$. Let $\upost_n^\star$ be the mean-field EnKF update in \eqref{eq:MFEnKF}, and let $\upost_n$ and $\upost_n^{\,\rho}$ be the EnKF and localized EnKF updates in \eqref{eq:EnKF} and \eqref{eq:localEnKF}. Let $c_0\gtrsim 1$ be an absolute constant and set
    \begin{align*}
        \rho_{N} \asymp \frac{c_0}{\sqrt{N}}\Big(\frac{1}{N}\sum_{n=1}^{N}\sup_{x\in D} u_{n}(x)\Big).
       \end{align*}
   Then, there is a universal constant $\lambda_0>0$ such that for $\lambda< \lambda_0$ and $N\gtrsim \log (\lambda^{-d})$,
    \begin{align*}  
        \E \left[ | \upost_n-\upost_n^{\star} | \mid u_n,\eta_n\right]
        &\lesssim c \bigg(\sqrt{\frac{\lambda^{-d}}{N}} \lor \frac{\lambda^{-d}}{N}\bigg),
         \\
        \E \left[ | \upost_n^{\,\rho}- \upost_n^{\star}| \mid u_n,\eta_n\right]& \lesssim c\Bigg[ c (q)\bigg(\frac{\log(\lambda^{-d})}{N}\bigg)^{\frac{1-q}{2}}\Bigg]
        ,
    \end{align*}
    where $c = \|\A\|\|\Gamma^{-1}\| \|\CC\| |y-\A u_n-\eta_n|$.
\end{theorem}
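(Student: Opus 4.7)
First, I will subtract the updates:
\begin{equation*}
\upost_n - \upost_n^\star = \bigl(\msK(\widehat{\CC}) - \msK(\CC)\bigr)\bigl(y - \mcA u_n - \eta_n\bigr),
\end{equation*}
and analogously $\upost_n^{\,\rho} - \upost_n^{\star} = \bigl(\msK(\widehat{\CC}_{\rho_N}) - \msK(\CC)\bigr)(y - \mcA u_n - \eta_n)$. Taking norms and using $|Tv| \le \|T\|\,|v|$ reduces matters to bounding $\E\bigl[\|\msK(\widehat{\CC}) - \msK(\CC)\| \,\big|\, u_n, \eta_n\bigr]$ and the analogous thresholded quantity. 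The paper's leave-one-out convention is crucial here: $\widehat{\CC}$ and $\widehat{\CC}_{\rho_N}$ are built from the other $N-1$ forecast particles and are therefore independent of $u_n, \eta_n$, so the conditional expectation coincides with the unconditional one and the factor $|y - \mcA u_n - \eta_n|$ factors out as the last element of the constant $c$.

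The main analytical step is a Lipschitz bound for the Kalman gain. Writing $M_i := \mcA\CC_i\mcA^* + \Gamma$, a short telescoping computation yields the identity
\begin{equation*}
\msK(\CC_1) - \msK(\CC_2) = \bigl[I - \msK(\CC_1)\mcA\bigr](\CC_1 - \CC_2)\mcA^{*}M_2^{-1}.
\end{equation*}
Combined with $M_i \succeq \Gamma$ (so that $\|M_i^{-1}\| \le \|\Gamma^{-1}\|$) and the crude estimate $\|\msK(\CC_1)\mcA\| \le \|\CC_1\|\|\mcA\|^2\|\Gamma^{-1}\|$, this gives
\begin{equation*}
\|\msK(\CC_1) - \msK(\CC_2)\| \lesssim \|\mcA\|\|\Gamma^{-1}\|\bigl(1 + \|\CC_1\|\|\mcA\|^{2}\|\Gamma^{-1}\|\bigr)\|\CC_1 - \CC_2\|.
\end{equation*}
Applying this with $(\CC_1,\CC_2) \in \{(\widehat{\CC},\CC),(\widehat{\CC}_{\rho_N},\CC)\}$, then factoring $\|\CC\|$ from the right-hand side to expose the relative error $\|\widehat{\CC}-\CC\|/\|\CC\|$ and absorbing the dimensionless factor $(1 + \|\CC\|\|\mcA\|^{2}\|\Gamma^{-1}\|)$ into $\lesssim$, recovers the constant $c = \|\mcA\|\|\Gamma^{-1}\|\|\CC\||y - \mcA u_n - \eta_n|$ of the statement.

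Finally, I will invoke Theorem \ref{thm:smalllengthscale}: bound \eqref{eq:smalllengthscalewithoutthresholding} applied to $\E[\|\widehat{\CC} - \CC\|]/\|\CC\|$ yields the $\sqrt{\lambda^{-d}/N} \lor \lambda^{-d}/N$ rate for $\upost_n$, while bound \eqref{eq:smalllengthscalewiththresholding} applied to $\E[\|\widehat{\CC}_{\rho_N} - \CC\|]/\|\CC\|$ yields the $c(q)\bigl(\log(\lambda^{-d})/N\bigr)^{(1-q)/2}$ rate for $\upost_n^{\,\rho}$. The shift $N \to N-1$ induced by leave-one-out is immaterial since $N \gtrsim \log(\lambda^{-d})$ implies $N-1 \gtrsim \log(\lambda^{-d})$, and the tail term $\rho_N e^{-cN\rho_N^2}$ from Corollary \ref{corollary:mainresult} is negligible in the small-lengthscale regime as already exploited in the proof of Theorem \ref{thm:smalllengthscale}. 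The only delicate point I anticipate is choosing the right decomposition of $\msK(\CC_1) - \msK(\CC_2)$: the expression above places $M_2^{-1}$ on the right and $\msK(\CC_1)$ on the left so that, after factoring $\|\CC\|$, the leftover prefactor is dimensionless and may be absorbed, producing the advertised single-power dependence on $\|\mcA\|$, $\|\Gamma^{-1}\|$, and $\|\CC\|$ in $c$ rather than spurious squared factors.
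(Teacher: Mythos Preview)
Your proposal is correct and follows essentially the same route as the paper: subtract the updates, invoke a Lipschitz bound for the Kalman gain, and appeal to Theorem~\ref{thm:smalllengthscale}. The paper simply cites the Lipschitz inequality $\|\msK(\CC_1)-\msK(\CC_2)\|\le \|\CC_1-\CC_2\|\|\mcA\|\|\Gamma^{-1}\|(1+\|\CC\|\|\mcA\|^{2}\|\Gamma^{-1}\|)$ from \cite[Lemma~4.1]{kwiatkowski2015convergence}, whereas you derive it via the telescoping identity; both yield the same bound, and your explicit handling of the leave-one-out independence and the $N\to N-1$ shift is a nice addition that the paper leaves implicit.

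One small slip to fix: with your chosen ordering $(\CC_1,\CC_2)=(\widehat{\CC},\CC)$ the prefactor in your identity involves $\|\msK(\CC_1)\mcA\|\le \|\widehat{\CC}\|\|\mcA\|^{2}\|\Gamma^{-1}\|$, i.e.\ the \emph{random} $\|\widehat{\CC}\|$, not the deterministic $\|\CC\|$ you later absorb. Simply swap the roles---take $(\CC_1,\CC_2)=(\CC,\widehat{\CC})$ so that $\msK(\CC)$ sits on the left---and the prefactor becomes $(1+\|\CC\|\|\mcA\|^{2}\|\Gamma^{-1}\|)$, which is indeed bounded in the small-lengthscale regime since $\|\CC\|\asymp\lambda^{d}\to 0$.
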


\section{Thresholded Estimation of Covariance Operators}\label{sec:thresholdedestimation}
This section studies thresholded estimation of covariance operators in the general setting of Assumption \ref{assumption:main1}.
In Subsection \ref{ssec:covariancefunction} we show uniform error bounds on the sample covariance function estimator $\widehat{k}(x,x')$. These results are used in Subsection \ref{ssec:proofmainresult1} to prove our first main result, Theorem \ref{thm:mainresult}.

\subsection{Covariance Function Estimation}\label{ssec:covariancefunction}
In this subsection we establish uniform error bounds on the sample covariance function estimator. These bounds will play a central role in our analysis of thresholded estimation of covariance operators developed in the next subsection. We first establish a high-probability bound, which is uniform over both arguments of the covariance function. 

\begin{proposition}\label{prop:max_norm_bound} 
Under Assumption \ref{assumption:main1}, there exist positive absolute constants $c_1, c_2$ such that, for all $t \geq 1$, it holds with probability at least $1-c_1 e^{-c_2 t}$ that
   \[
    \sup_{x,x'\in D} \left|\widehat{\kk}(x,x')-\kk(x,x')\right|\lesssim \left[\left(\frac{t}{N} \vee \sqrt{\frac{t}{N}}\right)  
    \E\Big[\sup_{x\in D} u(x)\Big] \right] \vee \frac{\left( \E\left[\sup_{x\in D} u(x)\right]\right)^2}{N}.
    \]
\end{proposition}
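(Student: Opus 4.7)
The plan is to view $\sup_{x,x'\in D}|\widehat{\kk}(x,x') - \kk(x,x')|$ as the supremum of a centered empirical process indexed by $D\times D$ and apply a concentration inequality for suprema of empirical processes of sub-exponential random variables. Each summand $u_n(x)u_n(x') - \kk(x,x')$ is a centered product of Gaussians, hence sub-exponential, so an Adamczak-type inequality is the natural tool, yielding a bound of the form $\E\sup + \sigma\sqrt{t/N} + (t/N)\|M\|_{\psi_1}$, with $\sigma^2$ the weak variance of the relevant function class and $M$ an envelope.

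My first step is to reduce the bivariate supremum to two suprema of squared Gaussian processes via the polarization identity
\[
u(x)u(x') = \tfrac{1}{2}\bigl[u(x)^2 + u(x')^2 - (u(x) - u(x'))^2\bigr],
\]
which recasts the target as bounds for $\sup_{x}|\tfrac{1}{N}\sum_n (u_n(x)^2 - \kk(x,x))|$ and $\sup_{x,x'}|\tfrac{1}{N}\sum_n ((u_n(x) - u_n(x'))^2 - \E(u(x) - u(x'))^2)|$. For each, Assumption \ref{assumption:main1}(i) gives weak variance $\sigma^2\lesssim \sup_x \kk(x,x)^2 \le 1$, and Borell-TIS applied to the Gaussian process $\sup_x u_n(x)$ shows that the envelope $M\lesssim (\sup_x|u(x)|)^2$ has Orlicz-$1$ norm bounded by $(\E\sup_x u(x))^2$. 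Since non-degeneracy forces $\E\sup_x u(x)\gtrsim 1$, the $\sigma\sqrt{t/N}$ and $(t/N)\|M\|_{\psi_1}$ contributions can be absorbed into the deviation terms $\sqrt{t/N}\,\E\sup_x u(x)$ and $(t/N)\,\E\sup_x u(x)$ appearing in the claimed bound.

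The core technical step is controlling the expected supremum $\E\sup_x|\tfrac{1}{N}\sum_n (u_n(x)^2 - \kk(x,x))|$, which produces the $(\E\sup_x u(x))^2/N$ contribution. I would proceed by symmetrization with Rademacher multipliers followed by a conditional contraction: since $y\mapsto y^2$ is $2R$-Lipschitz on $[-R,R]$ with $R := \max_n\sup_x|u_n(x)|$, the Ledoux-Talagrand contraction principle applied conditional on the $u_n$'s yields
\[
\E_\epsilon\sup_x\Bigl|\frac{1}{N}\sum_n\epsilon_n u_n(x)^2\Bigr| \lesssim R\cdot\E_\epsilon\sup_x\Bigl|\frac{1}{N}\sum_n\epsilon_n u_n(x)\Bigr|.
\]
Desymmetrizing the right side reduces it to the expected supremum of the normalized Gaussian process $N^{-1}\sum_n u_n(x)$, which is of order $\E\sup_x u(x)/\sqrt{N}$, while Borell-TIS controls $\E R$ by $\E\sup_x u(x)$ up to an absorbable logarithmic factor; together these give the desired $(\E\sup_x u(x))^2/N$ bound.

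The main obstacle is making the contraction step rigorous: $y\mapsto y^2$ is not globally Lipschitz, so the conditioning must be paired with a truncation argument ensuring that the random Lipschitz constant $R$ interacts cleanly with the expectation. The same scheme handles the difference-type supremum $\sup_{x,x'}|\tfrac{1}{N}\sum_n((u_n(x)-u_n(x'))^2 - \E(u(x)-u(x'))^2)|$, with the role of $\sup_x u(x)$ being played by $\sup_{x,x'}(u(x)-u(x'))\le 2\sup_x|u(x)|$. Assembling the three contributions from the Adamczak bound and taking the maximum with the deviation terms yields the stated high-probability estimate.
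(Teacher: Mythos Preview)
Your approach differs from the paper's. The paper writes $\sup_{x,x'}|\widehat{k}-k|$ as a product empirical process over the class $\mathcal{F}=\{\ell_x:u\mapsto u(x)\}_{x\in D}$ of evaluation functionals and applies Mendelson's bound for such processes directly. That theorem delivers the stated inequality in terms of $\sup_{f\in\mathcal{F}}\|f\|_{\psi_2}$ and Talagrand's functional $\gamma_2(\mathcal{F},\psi_2)$; the former is at most $1$ by Assumption~\ref{assumption:main1}~\ref{assumption:magnitude}, and the majorizing-measure theorem identifies the latter with $\E[\sup_{x\in D}u(x)]$. No polarization, contraction, or separate expectation/deviation analysis is needed.

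Your route has a concrete gap in the expectation step. After contraction you obtain, conditionally on the samples, $\E_\epsilon\sup_x\bigl|\tfrac{1}{N}\sum_n\epsilon_n u_n(x)^2\bigr|\lesssim R\,\E_\epsilon\sup_x\bigl|\tfrac{1}{N}\sum_n\epsilon_n u_n(x)\bigr|$ with $R=\max_n\sup_x|u_n(x)|$. You estimate $\E R\asymp m:=\E\sup_x u(x)$ and the inner supremum as $m/\sqrt{N}$, but their product is $m\cdot m/\sqrt{N}=m^2/\sqrt{N}$, not the $m^2/N$ you assert. This is not a slip that can be repaired locally: naive contraction for quadratic forms loses a factor equal to the envelope size relative to the sharp expectation bound $\tfrac{m}{\sqrt{N}}\vee\tfrac{m^2}{N}$, and closing that gap requires chaining with mixed sub-Gaussian/sub-exponential increments---which is exactly what Mendelson's theorem packages. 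A parallel problem appears in your deviation term: the Adamczak envelope contributes $(t/N)\,\|\max_{n\le N}\sup_x u_n(x)^2\|_{\psi_1}\asymp (t/N)\,m^2\log N$, not the $(t/N)\,m$ of the proposition, and your absorption claim fails whenever $m$ is large---precisely the small-lengthscale regime where $m\asymp\sqrt{\log(\lambda^{-d})}\to\infty$.
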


\begin{proof}
We will apply the product empirical process bound in \cite[Theorem 1.13]{mendelson2016upper}. To that end, define the evaluation functional at $x\in D$ by
    \[
    \ell_x: u \longmapsto \ell_x(u)=u(x)
    \]
and write
    \[
    \left|\widehat{\kk}(x,x')-\kk(x,x')\right|=\left|\frac{1}{N}\sum_{n=1}^{N}u_n(x)u_n(x')-\E\left[u(x)u(x')\right]\right|=\left|\frac{1}{N}\sum_{n=1}^{N}\ell_{x}(u_n)\ell_{x'}(u_n)-\E \left[\ell_x(u)\ell_{x'}(u)\right]\right|,
    \]
so that
    \[
    \sup_{x,x'\in D}\left|\widehat{\kk}(x,x')-\kk(x,x')\right|=\sup_{f,g\in\mathcal{F}}\left|\frac{1}{N}\sum_{n=1}^{N}f(u_n)g(u_n)-\E\left[f(u)g(u)\right]\right|,
    \]
    where $\mathcal{F}:=\{\ell_x\}_{x\in D}$ denotes the family of evaluation functionals. Note that $\{\ell_x\}_{x\in D}$ are continuous linear functionals on $C(D),$ the space of continuous functions on $D$ endowed with its usual topology. We can then apply \cite[Theorem 1.13]{mendelson2016upper} (see also \cite[Theorem B.11]{ghattas2022non}) which implies that, with probability $1-c_1e^{-c_2t}$,
    \begin{gather}\label{eq:C_hat-C}
    \sup_{x,x'\in D}\left|\widehat{\kk}(x,x')-\kk(x,x')\right|\lesssim \left[\left(\frac{t}{N} \vee \sqrt{\frac{t}{N}}\right)\left(\sup _{f \in \mathcal{F}}\|f\|_{\psi_2} \gamma_2\left(\mathcal{F}, \psi_2\right) \right)\right] \vee \frac{ \gamma_2^2\left(\mathcal{F}, \psi_2\right)}{N},
    \end{gather}
    where here and henceforth $\gamma_2$ denotes Talagrand's generic complexity \cite[Definition 2.7.3]{talagrand2022upper} and $\psi_2$ denotes the Orlicz norm with Orlicz function $\psi(x) = e^{x^2}-1,$ see e.g. \cite[Definition 2.5.6]{vershynin2018high}. Since $u$ is Gaussian, the $\psi_2$-norm of linear functionals is equivalent to the $L^2$-norm. Hence,
\begin{gather}\label{eq:aux1}
\sup_{f\in\mathcal{F}}\|f\|_{\psi_2}\lesssim \sup_{f\in\mathcal{F}}\|f\|_{L^2}=\sup_{f\in\mathcal{F}}\sqrt{\E\left[f^2(u)\right]}=\sup_{x\in D}\sqrt{\E\left[ u^2(x)\right]}=\sup_{x\in D}\sqrt{\kk(x,x)} = 1,
\end{gather}
where we used Assumption \ref{assumption:main1} \ref{assumption:magnitude} in the last step.
Next, to control the complexity $\gamma_2\left(\mathcal{F}, \psi_2\right),$ let
\begin{equation*}
    \mathsf{d}(x, x'):=\sqrt{\mathbb{E}\left[(u(x)-u(x'))^2\right]}=\|\ell_x(\cdot)-\ell_{x'}(\cdot)\|_{L^2(P)},\quad x,x'\in D,
\end{equation*}
where $P$ is the distribution of the random function $u$. Then,
\begin{equation}\label{eq:aux2}
  \gamma_2(\mathcal{F},\psi_2)\overset{(\text{i})}{\lesssim}\gamma_2(\mathcal{F},L^2)=\gamma_2(D,\mathsf{d})\overset{(\text{ii})}{\asymp} \E\left[\sup_{x\in D} u(x)\right],  
\end{equation}
where (i) follows by the equivalence of $\psi_2$ and $L^2$ norms for linear functionals and (ii) follows by Talagrand's majorizing-measure theorem \cite[Theorem 2.10.1]{talagrand2022upper}. Combining the inequalities \eqref{eq:C_hat-C}, \eqref{eq:aux1}, and \eqref{eq:aux2} gives the desired result.
\end{proof}


\begin{corollary}\label{cor:max_norm_bound}
Under Assumption \ref{assumption:main1},
     it holds that, for any $p\ge 1$,
    \[
    \left(\E \left[ \sup_{x,x'\in D} \left|\widehat{k}(x,x')-k(x,x')\right|^p\right]\right)^{\frac{1}{p}} \lesssim_p \frac{
    \E\left[\sup_{x\in D} u(x)\right]}{\sqrt{N}}\vee \frac{\left( \E\left[\sup_{x\in D} u(x)\right]\right)^2}{N}.
    \]
\end{corollary}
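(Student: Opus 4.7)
The plan is to obtain the moment bound by integrating the tail bound provided by Proposition \ref{prop:max_norm_bound}. Let $Z := \sup_{x,x' \in D} |\widehat{k}(x,x') - k(x,x')|$ and $M := \E[\sup_{x \in D} u(x)]$. The identity $\E[Z^p] = \int_0^\infty p s^{p-1} \Pr(Z > s)\, ds$ reduces the task to converting the $t$-indexed tail bound into an $s$-indexed one and then splitting the resulting integral into a sub-Gaussian and a sub-exponential regime.

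First I would rewrite Proposition \ref{prop:max_norm_bound} as: there exist universal constants $C, c_1, c_2 > 0$ such that for every $t \ge 1$,
\[
\Pr\!\left(Z > C \left[ \left(\tfrac{t}{N} \vee \sqrt{\tfrac{t}{N}} \right) M \;\vee\; \tfrac{M^2}{N} \right]\right) \le c_1 e^{-c_2 t}.
\]
Setting $s_0 := C\bigl(M/\sqrt{N} \vee M^2/N\bigr)$ (which corresponds to $t=1$), this inequality can be inverted in two pieces. For $s \in (s_0, CM]$, the sub-Gaussian branch $\sqrt{t/N}$ dominates, so solving $s = CM\sqrt{t/N}$ gives $t = Ns^2/(C^2 M^2)$, hence $\Pr(Z > s) \le c_1 \exp(-c_2' Ns^2/M^2)$. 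For $s > CM$, the linear branch $t/N$ dominates, giving $\Pr(Z > s) \le c_1 \exp(-c_2' Ns/M)$. For $s \le s_0$ we simply use $\Pr(Z > s) \le 1$.

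Next I would split and bound the moment integral:
\[
\E[Z^p] \;\le\; s_0^p \;+\; \int_{s_0}^{CM} p s^{p-1} c_1 e^{-c_2' Ns^2/M^2}\, ds \;+\; \int_{CM}^\infty p s^{p-1} c_1 e^{-c_2' Ns/M}\, ds.
\]
The first term already satisfies $s_0^p \le \bigl(M/\sqrt{N} \vee M^2/N\bigr)^p$. A change of variables $u = s\sqrt{c_2' N}/M$ in the Gaussian-type integral reduces it to $(M/\sqrt{N})^p$ times $\int_0^\infty p u^{p-1} e^{-u^2} du \lesssim_p 1$. In the third integral, the substitution $s = Mu$ yields $p M^p \int_C^\infty u^{p-1} e^{-c_2' N u}\, du$, which for $N \gtrsim 1$ is bounded by $\lesssim_p M^p e^{-c_2' N C}/N$, an exponentially small correction dominated by the other terms. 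Combining the three contributions and taking $p$-th roots yields the claimed bound $\lesssim_p M/\sqrt{N} \vee M^2/N$.

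The main (but mild) obstacle is keeping the regimes straight when inverting the high-probability bound: the bound in Proposition \ref{prop:max_norm_bound} is a $\max$ of three terms (linear in $t$, square-root in $t$, and $t$-independent), and one has to verify that the thresholds $s_0$ and $CM$ separate these cleanly so that every piece of the tail integrand is majorized by a standard Gaussian or exponential tail. Once this bookkeeping is handled, the remaining computations are routine Gaussian/exponential integral estimates, with the prefactor $\Gamma(p)$ or $\Gamma(p/2+1)$ absorbed into the implicit $\lesssim_p$ constant.
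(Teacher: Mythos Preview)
Your approach is correct and is exactly what the paper does: its proof of Corollary~\ref{cor:max_norm_bound} is the single sentence ``The result follows by integrating the tail bound in Proposition~\ref{prop:max_norm_bound},'' and you have simply written out that integration in detail. One small clean-up: your stated bound on the sub-exponential piece, $M^p e^{-c_2' N C}/N$, has the wrong power of $N$ in general; it is cleaner to use $\int_C^\infty u^{p-1} e^{-c_2' N u}\,du \le \Gamma(p)/(c_2' N)^p$, which gives the third integral as $\lesssim_p (M/N)^p \le (M/\sqrt{N})^p$ for all $N\ge 1$ without any ``$N\gtrsim 1$'' caveat.
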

\begin{proof} The result follows by integrating the tail bound in Proposition \ref{prop:max_norm_bound}. 
\end{proof}

In contrast to Proposition~\ref{prop:max_norm_bound}, the following result provides uniform control over the error when holding fixed one of the two covariance function inputs.
For this easier estimation task, we obtain an improved exponential tail bound that we will use in the proof of Theorem \ref{thm:mainresult}.

\begin{proposition}\label{prop:single_supremum}
Suppose that Assumption \ref{assumption:main1} holds.
 Let $1\le c_0\le N$ and set 
\[
\rho_N := c_0\left[\frac{1}{N}\lor \frac{1}{\sqrt{N}}\E\Big[\sup_{x\in D} u(x)\Big]\lor \frac{1}{N}\Big(\E\Big[\sup_{x\in D} u(x)\Big]\Big)^2\right].
\]
Then, for every $x'\in D$, it holds with probability at least $1-4e^{-c_1 N(\rho_N\land\, \rho_N^2)}$ that 
   \[
    \sup_{x\in D} \,\left|\widehat{\kk}(x,x')-\kk(x,x')\right|\lesssim \rho_N.
    \]
\end{proposition}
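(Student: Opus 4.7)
The plan is to fix $x' \in D$ and analyze the one-parameter empirical process
\[
Z_x := \widehat{k}(x,x') - k(x,x') = \frac{1}{N}\sum_{n=1}^N \bigl[\ell_x(u_n)\ell_{x'}(u_n) - \E[\ell_x(u)\ell_{x'}(u)]\bigr], \quad x \in D,
\]
following the framework of Proposition \ref{prop:max_norm_bound}, but now exploiting the asymmetry that the second factor of the product is held fixed. The key observation is that, by Assumption \ref{assumption:main1}(i) and the Gaussianity of $u$, one has both the pointwise bound $\|\ell_x(u)\ell_{x'}(u)\|_{\psi_1} \lesssim \|\ell_x(u)\|_{\psi_2}\|\ell_{x'}(u)\|_{\psi_2} \lesssim 1$ and, for the canonical Gaussian metric $d(x,y) := \sqrt{\E[(u(x)-u(y))^2]}$, the increment bound $\|\ell_{x'}(u)(\ell_x(u) - \ell_y(u))\|_{\psi_1} \lesssim d(x,y)$. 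The pointwise $\psi_1$-size being $O(1)$ rather than $O(\E[\sup_x u(x)])$ is the genuine improvement over Proposition \ref{prop:max_norm_bound}, whose two-sided supremum is sensitive to the complexity of both factors.

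I would then apply the product empirical process inequality of \cite[Theorem 1.13]{mendelson2016upper} with $\mathcal{F} = \{\ell_x\}_{x \in D}$ and the degenerate second class $\mathcal{G} = \{\ell_{x'}\}$; since $\|\ell_{x'}\|_{\psi_2} \lesssim 1$ and $\gamma_2(\mathcal{G}, \psi_2) = 0$, and since $\gamma_2(\mathcal{F}, \psi_2) \asymp \E[\sup_x u(x)]$ by Talagrand's majorizing measure theorem (as used in the proof of Proposition \ref{prop:max_norm_bound}), the inequality specializes to: for every $t \geq 1$, with probability at least $1 - c_1 e^{-c_2 t}$,
\[
\sup_{x \in D} |Z_x| \lesssim \frac{\E[\sup_x u(x)]}{\sqrt{N}} \vee \frac{(\E[\sup_x u(x)])^2}{N} \vee \sqrt{\frac{t}{N}} \vee \frac{t}{N}.
\]
The first two summands reproduce the expectation bound of Corollary \ref{cor:max_norm_bound}; the crucial feature is that the Bernstein deviations $\sqrt{t/N}$ and $t/N$ appear with only $O(1)$ prefactors, rather than being multiplied by $\E[\sup u]$ as in Proposition \ref{prop:max_norm_bound}, precisely because the singleton class contributes no complexity of its own.

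The proof would conclude by choosing $t \asymp N(\rho_N \wedge \rho_N^2)$. By the definition of $\rho_N$, each of the four summands above is then $\lesssim \rho_N$, yielding $\sup_x |Z_x| \lesssim \rho_N$ with probability at least $1 - 4 e^{-c_1 N(\rho_N \wedge \rho_N^2)}$. The constraint $t \geq 1$ translates to requiring $N(\rho_N \wedge \rho_N^2) \gtrsim 1$; outside this regime the stated tail bound is vacuous, since its right-hand side is bounded below by a positive constant. The main obstacle is justifying the precise singleton specialization of Mendelson's inequality—specifically, that the Bernstein tail terms are free of the $\gamma_2(\mathcal{F})$ complexity prefactor that is unavoidable in the two-sided version. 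This asymmetric product empirical process bound encapsulates the entire benefit of fixing one argument of the covariance function and distinguishes this proposition from Proposition \ref{prop:max_norm_bound}.
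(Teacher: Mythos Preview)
Your approach has a genuine gap, and you have correctly identified it yourself: the specialization of \cite[Theorem~1.13]{mendelson2016upper} to a singleton class $\mathcal{G}=\{\ell_{x'}\}$ does \emph{not} produce Bernstein fluctuations $\sqrt{t/N}\vee t/N$ free of the complexity prefactor $\gamma_2(\mathcal{F},\psi_2)$. Inspecting the form quoted in~\eqref{eq:C_hat-C}, the $t$-dependent term is $(\sqrt{t/N}\vee t/N)\,d_{\psi_2}(\mathcal{F})\,\gamma_2(\mathcal{F},\psi_2)$; making $\mathcal{G}$ a singleton kills the $\gamma_2^2/N$ term (and hence your $(\E[\sup u])^2/N$ term should not even appear), but the fluctuation still carries a factor $\gamma_2(\mathcal{F})\asymp\E[\sup_x u(x)]$. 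With $t=N(\rho_N\wedge\rho_N^2)$ you would then obtain a deviation of order $\rho_N\,\E[\sup_x u(x)]$, not $\rho_N$, which is useless precisely in the small-lengthscale regime where $\E[\sup_x u(x)]\to\infty$. So the ``asymmetric product'' route via Theorem~1.13 does not close.

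The paper instead treats the process as a \emph{multiplier} empirical process and invokes \cite[Theorem~4.4]{mendelson2016upper}: writing $\xi_n=u_n(x')$, one obtains the two-parameter bound that for any $s,t\ge 1$, with probability at least $1-2e^{-c_1 s^2(\E[\sup_x u(x)])^2}-2e^{-c_1 N t^2}$,
\[
\sup_{x\in D}|\widehat k(x,x')-k(x,x')|\lesssim \frac{st\,\|\xi\|_{\psi_2}\,\E[\sup_x u(x)]}{\sqrt N}\le \frac{st\,\E[\sup_x u(x)]}{\sqrt N}.
\]
The crucial feature is that the failure probability has \emph{two} exponents, one scaling with $(\E[\sup u])^2$ and one with $N$, and the freedom to tune $s$ and $t$ separately is what replaces the decoupling you were hoping to get from Theorem~1.13. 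The proof then proceeds by a three-case analysis according to whether $\E[\sup_x u(x)]$ lies below $N^{-1/2}$, in $[N^{-1/2},N^{1/2}]$, or above $N^{1/2}$ (matching the three branches in the definition of $\rho_N$), choosing $(s,t)$ in each case so that $st\,\E[\sup u]/\sqrt N=\rho_N$ and both exponents are at least $cN(\rho_N\wedge\rho_N^2)$.
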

\begin{proof}
We will apply the multiplier empirical process bound in \cite[Theorem 4.4]{mendelson2016upper}. To that end, we write 
    \[
    \left|\widehat{\kk}(x,x')-\kk(x,x')\right|=\left|\frac{1}{N}\sum_{n=1}^{N}u_n(x)u_n(x')-\E\left[u(x)u(x')\right]\right|=\left|\frac{1}{N}\sum_{n=1}^{N}\ell_{x}(u_n)\ell_{x'}(u_n)-\E \left[\ell_x(u)\ell_{x'}(u)\right]\right|,
    \]
    so that for the class $\mathcal{F}:=\{\ell_x\}_{x\in D}$ of evaluation functionals and for a fixed $g \in \mcF$, we have
    \begin{align*}
    \sup_{x\in D}\,\left|\widehat{\kk}(x,x')-\kk(x,x')\right|
    &=\sup_{f \in\mathcal{F}}\,\left|\frac{1}{N}\sum_{n=1}^{N}f(u_n)g(u_n)-\E\left[f(u)g(u)\right]\right|\\
    &=\frac{1}{N} \sup_{f \in\mathcal{F}}\,\left|\sum_{n=1}^{N} \big(f(u_n)\xi_n-\E\left[f(u)\xi \right]\big)\right|,
    \end{align*}
    where $\xi_n := g(u_n).$ Note that $\xi_1,\dots, \xi_N$ are i.i.d. copies of $\xi \sim \mathcal{N}\bigl(0,k(x',x')\bigr),$ where $x'\in D$ is the point indexed by $g$. By \cite[Theorem 4.4]{mendelson2016upper} we have that for any $s,t\ge 1,$ it holds with probability at least $1-2e^{-c_1 s^2 (\E[\sup_{x \in D} u(x)])^2}-2e^{-c_1N t^2}$ that 
    \begin{equation}\label{eq:lemma45_aux1}
    \sup_{x\in D} |\widehat{k}(x,x')-k(x,x')|\lesssim \frac{st \| \xi\|_{\psi_2} \,\E[\sup_{x \in D} u(x)]}{\sqrt{N}}\le \frac{st \,\E[\sup_{x \in D} u(x)]}{\sqrt{N}},
    \end{equation}
    where the last inequality follows by the fact that $\| \xi \|_{\psi_2} \le \sqrt{k(x',x')} \le \sup_{x \in D} \sqrt{k(x,x)}=1.$  We consider three cases:
 
      \emph{Case 1:} 
    If $\E [\sup_{x \in D} u(x)]<\frac{1}{\sqrt{N}}$, then $\rho_N=\frac{c_0}{N}<1$. We take 
    \[
    s=\frac{c_0}{\sqrt{N}\,\E[\sup_{x \in D} u(x)]}> 1,
    \qquad t=1,
    \]
    and then \eqref{eq:lemma45_aux1} implies that it holds with probability at least $1-2e^{-c_1 c_0^2/N}-2e^{-c_1N}\overset{\text{(i)}}{\ge} 1-4e^{-c_1 c_0^2/N}=1-4e^{-c_1N \rho_N^2 }$ that 
    \[
    \sup_{x\in D} |\widehat{k}(x,x')-k(x,x')|\lesssim \frac{st \,\E[\sup_{x \in D} u(x)]}{\sqrt{N}}=\frac{c_0}{N}=\rho_N,
    \]
    where (i) follows since $c_0<N$ by assumption. 
  
   \emph{Case 2:} If $\frac{1}{\sqrt{N}}\le \E[\sup_{x \in D} u(x)]\le\sqrt{N}$, then $\rho_N=\frac{c_0}{\sqrt{N}}\E[\sup_{x \in D} u(x)]$. In this case, if $\rho_N=\frac{c_0}{\sqrt{N}}\E[\sup_{x \in D} u(x)]>1$, we take
    \[
    s=\sqrt{\frac{c_0\sqrt{N}}{\E[\sup_{x \in D} u(x)]}}\ge 1,
    \qquad 
    t=\sqrt{\frac{c_0}{\sqrt{N}}\E[\sup_{x \in D} u(x)]}>1,
    \]
    and then \eqref{eq:lemma45_aux1} implies that it holds  with probability at least $1-4e^{-c_1c_0\sqrt{N}\E[\sup_{x \in D} u(x)]}=1-4e^{-c_1N\rho_N}$ that
    \[
    \sup_{x\in D} |\widehat{k}(x,x')-k(x,x')|\lesssim \frac{st \,\E[\sup_{x \in D} u(x)]}{\sqrt{N}}=\frac{c_0}{\sqrt{N}}\E[\sup_{x \in D} u(x)]=\rho_N.
    \]
     If $\rho_N=\frac{c_0}{\sqrt{N}}\E[\sup_{x \in D} u(x)]\le 1$, then we take $s=c_0\ge 1$ and  $t=1$, and \eqref{eq:lemma45_aux1} implies that, with probability at least $$1-2e^{-c_1c_0^2 (\E[\sup_{x \in D} u(x)])^2}-2e^{-c_1N}\ge 1-4e^{-c_1c_0^2 (\E[\sup_{x \in D} u(x)])^2}=1-4e^{-c_1N\rho_N^2},$$ it holds that
    \[
    \sup_{x\in D} |\widehat{k}(x,x')-k(x,x')|\lesssim \frac{st \,\E[\sup_{x \in D} u(x)]}{\sqrt{N}}=\frac{c_0}{\sqrt{N}}\E[\sup_{x \in D} u(x)]=\rho_N.
    \]
    
   \emph{Case 3:} If $\E[\sup_{x \in D} u(x)]>\sqrt{N}$, then $\rho_N=\frac{c_0}{N}(\E[\sup_{x \in D} u(x)])^2>1.$ We take
    \[
    s=\sqrt{c_0}\ge 1,
    \qquad 
    t=\sqrt{c_0}\frac{\E [\sup_{x \in D} u(x)]}{\sqrt{N}}>1,
    \]
    and \eqref{eq:lemma45_aux1} implies that it holds with probability at least $1-4e^{-c_1c_0 (\E[\sup_{x \in D} u(x)])^2}=1-4e^{-c_1N\rho_N}$ that 
    \[
    \sup_{x\in D} |\widehat{k}(x,x')-k(x,x')|\lesssim \frac{st \,\E[\sup_{x \in D} u(x)]}{\sqrt{N}}=\frac{c_0}{N}(\E[\sup_{x \in D} u(x)])^2=\rho_N.
    \]
    Combining the three cases above gives the desired result.   
\end{proof}

\subsection{Proof of Theorem \ref{thm:mainresult}}\label{ssec:proofmainresult1}

Before proving Theorem \ref{thm:mainresult}, the next result establishes moment and concentration bounds for the estimator $\widehat{\rho}_N$ of the thresholding parameter $\rho_N.$
\begin{lemma}\label{lemma:hat_rhoN_rhoN} 
   Under the setting of Theorem \ref{thm:mainresult}, it holds that 
\begin{enumerate}[label=(\Alph*)]
    \item For any $p\ge 1$, $
   \E \big[\, \widehat{\rho}_N^{\,p}\big] \lesssim_p \rho_N^p$.
   \item For any $t\in (0,1)$,
   \begin{align}
\mathbb{P}\left[\,\widehat{\rho}_N<t\rho_N\right]&\le 2\,e^{-\frac{1}{2}(1-\sqrt{t})^2 N (\E [\sup_{x \in D} u(x)])^2}\mathbf{1}\big\{\E[\sup_{x \in D} u(x)]\ge 1/\sqrt{N}\big\}\label{eq:cor42_aux1}\\
   &\le 2\,e^{-\frac{1}{2}(1-\sqrt{t})^2 N (\rho_N \land \,\rho_N^2)}\label{eq:cor42_aux2}.
   \end{align}
\end{enumerate}
\end{lemma}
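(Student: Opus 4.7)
Let $M := \E\big[\sup_{x \in D} u(x)\big]$, $S_n := \sup_{x \in D} u_n(x)$, and $\bar{S} := \frac{1}{N}\sum_{n=1}^{N} S_n$, so that $\rho_N = \phi(M)$ and $\widehat{\rho}_N = \phi(\bar{S})$ for the function $\phi(y) := c_0\big[\frac{1}{N} \lor \frac{y}{\sqrt{N}} \lor \frac{y^2}{N}\big]$, which is nondecreasing on $[0,\infty)$. The central ingredient is the Gaussian concentration of $\bar{S}$ around $M$: by the Borell--TIS inequality together with Assumption~\ref{assumption:main1}~\ref{assumption:magnitude}, each $S_n - M$ is sub-Gaussian with variance proxy $\sup_{x \in D} k(x,x) = 1$, so by independence $\bar{S} - M$ is sub-Gaussian with variance proxy $1/N$. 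In particular, $\|\bar{S} - M\|_{L^p} \lesssim_p N^{-1/2}$ and $\mathbb{P}[M - \bar{S} > s] \le 2 e^{-Ns^2/2}$ for every $s \ge 0$.

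For Part (A), bound $\widehat{\rho}_N^p \lesssim_p c_0^p\big[N^{-p} + N^{-p/2}\bar{S}^p + N^{-p}\bar{S}^{2p}\big]$, apply Minkowski's inequality to the decomposition $\bar{S} = M + (\bar{S} - M)$ to get $\E[\bar{S}^p] \lesssim_p M^p + N^{-p/2}$ and $\E[\bar{S}^{2p}] \lesssim_p M^{2p} + N^{-p}$, and substitute. The resulting upper bound $c_0^p\big[N^{-p} + M^p N^{-p/2} + M^{2p} N^{-p}\big]$ is exactly of order $\rho_N^p$, since $\rho_N$ is the maximum of the three constituent quantities.

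For Part (B), first observe that if $M < 1/\sqrt{N}$ then $\rho_N = c_0/N \le \widehat{\rho}_N$ deterministically, so $\{\widehat{\rho}_N < t\rho_N\}$ is empty for $t \in (0,1)$; this accounts for the indicator $\mathbf{1}\{M \ge 1/\sqrt{N}\}$ in \eqref{eq:cor42_aux1}. Assuming $M \ge 1/\sqrt{N}$, split further. If $1/\sqrt{N} \le M \le \sqrt{N}$ then $\rho_N = c_0 M/\sqrt{N}$, and the elementary lower bound $\widehat{\rho}_N \ge c_0 \bar{S}/\sqrt{N}$ (a term in the defining maximum) combined with $\widehat{\rho}_N < t \rho_N$ forces $\bar{S} < tM$, hence $M - \bar{S} > (1-t)M \ge (1-\sqrt{t}) M$. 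If $M > \sqrt{N}$ then $\rho_N = c_0 M^2/N$, and the lower bound $\widehat{\rho}_N \ge c_0 \bar{S}^2/N$ forces $\bar{S}^2 < tM^2$; since $M > 0$ this yields $\bar{S} < \sqrt{t}\, M$, hence $M - \bar{S} > (1-\sqrt{t}) M$. In either sub-case the event is contained in $\{M - \bar{S} > (1-\sqrt{t}) M\}$, whose probability is at most $2 e^{-(1-\sqrt{t})^2 N M^2/2}$ by the sub-Gaussian tail, proving \eqref{eq:cor42_aux1}. The bound \eqref{eq:cor42_aux2} then follows once one verifies $N M^2 \ge N(\rho_N \land \rho_N^2)$ in each sub-case: a short calculation using $M = \rho_N \sqrt{N}/c_0$ in the first sub-case and $\rho_N = c_0 M^2/N \ge c_0 \ge 1$ (so that $\rho_N \land \rho_N^2 = \rho_N$) in the second, together with the standing hypothesis $1 \le c_0 \le \sqrt{N}$.

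\textbf{Main obstacle.} The only delicate point is pinning down the correct rate exponent in Part (B): the sub-case $M > \sqrt{N}$ is the binding one and, because we must extract square roots from $\bar{S}^2 < tM^2$, forces the weaker constant $(1-\sqrt{t})^2$ rather than the sharper $(1-t)^2$ available in the complementary sub-case. Part (A) is essentially a routine computation once the sub-Gaussian concentration of $\bar{S}$ has been established.
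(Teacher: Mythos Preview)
Your argument is correct and, for Part (B), essentially identical to the paper's: the same three-case split on the size of $M = \E[\sup_x u(x)]$, the same use of the lower bounds $\widehat{\rho}_N \ge c_0\bar{S}/\sqrt{N}$ and $\widehat{\rho}_N \ge c_0\bar{S}^2/N$ coming from individual terms of the defining maximum, and the same Gaussian concentration of $\bar{S}$ (which the paper records as a separate lemma via a Chernoff argument rather than citing Borell--TIS directly, but the content is the same). Your observation that the binding sub-case $M>\sqrt{N}$ forces the constant $(1-\sqrt{t})^2$ is exactly the paper's reasoning as well.

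For Part (A) your route differs slightly from the paper's. The paper works on the tail side: it shows that on the event $\{|\bar{S}-M|\le t\}$ one has $\widehat{\rho}_N \le 4\rho_N \lor \tfrac{2t}{\sqrt{N}} \lor \tfrac{4t^2}{N}$, then integrates $p\int_0^\infty s^{p-1}\mathbb{P}[\widehat{\rho}_N\ge s]\,ds$ by splitting at $4\rho_N$. You instead bound $\widehat{\rho}_N^{\,p}$ pointwise by a sum of three terms and take expectations directly, controlling $\E[|\bar{S}|^p]$ and $\E[\bar{S}^{2p}]$ via Minkowski and the sub-Gaussian moment bound $\|\bar{S}-M\|_{L^p}\lesssim_p N^{-1/2}$. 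Both arguments are short; yours is arguably cleaner since it avoids the tail integral entirely. One small cosmetic point: in your pointwise bound you should write $|\bar{S}|^p$ rather than $\bar{S}^p$, since $\bar{S}$ is not guaranteed to be nonnegative and $p$ need not be an integer; the Minkowski step already handles this implicitly.
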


The proof of Lemma \ref{lemma:hat_rhoN_rhoN} can be found in Appendix \ref{app:A}.

\begin{proof}[Proof of Theorem \ref{thm:mainresult}.]
As shown in Lemma \ref{lemma:operator_norm_bound} in Appendix \ref{app:B}, the operator norm can be upper bounded as 
\[
\|\widehat{\CC}_{\widehat{\rho}_N}-\CC\| \le \sup_{x\in D}\int_{D} \,\big|\widehat{k}_{\widehat{\rho}_N}(x,x')-k(x,x')\big| \,dx'.
\]

Let $\Omega_x:=\left\{x'\in D: |k(x,x')|\ge \widehat{\rho}_N\right\}$ and let $\Omega^c_x$ be its complement. Then, we have 
\begin{gather}\label{eq:thm22_main}
\begin{split}
&\E \|\widehat{\CC}_{\widehat{\rho}_N}-\CC\|^p \le \E\left[\bigg(\sup_{x\in D}\int_{D} \ \big|\widehat{k}_{\widehat{\rho}_N}(x,x')-k(x,x')\big| \ dx'\bigg)^p\right]\\
&\le 2^{p-1}\E\left[\bigg(\sup_{x\in D}\int_{\Omega_x} \ \big|\widehat{k}_{\widehat{\rho}_N}(x,x')-k(x,x')\big| \,dx'\bigg)^p\right] +  2^{p-1}\E\left[\bigg(\sup_{x\in D}\int_{\Omega^c_x} \ \big|\widehat{k}_{\widehat{\rho}_N}(x,x')-k(x,x')\big| \,dx'\bigg)^p\right] \\
&\lesssim_p \E\left[\bigg(\sup_{x\in D}\int_{\Omega_x} \ \big|\widehat{k}_{\widehat{\rho}_N}(x,x')-k(x,x')\big| \, dx'\bigg)^p\right]+\E\left[\bigg(\sup_{x\in D}\int_{\Omega^c_x} \ |k(x,x')| \mathbf{1}\big\{|\widehat{k}(x,x')|< \widehat{\rho}_N\big\} \, dx' \bigg)^p\right]\\
&\hspace{0.3cm} +\E\left[\bigg(\sup_{x\in D}\int_{\Omega^c_x} \ |\widehat{k}(x,x')-k(x,x')| \mathbf{1}\big\{|\widehat{k}(x,x')|\ge \widehat{\rho}_N\big\}\mathbf{1}\big\{|\widehat{k}(x,x')-k(x,x')|< 4|k(x,x')|\big\} \,dx'\bigg)^p\right]\\
&\hspace{0.3cm} +\E\left[\bigg(\sup_{x\in D}\int_{\Omega^c_x} \ |\widehat{k}(x,x')-k(x,x')| \mathbf{1}\big\{|\widehat{k}(x,x')|\ge \widehat{\rho}_N\big\} \mathbf{1}\big\{|\widehat{k}(x,x')-k(x,x')|\ge 4|k(x,x')|\big\} \,dx'\bigg)^p\right]\\
&=:I_1+I_2+I_3+I_4,
\end{split}
\end{gather}
where in the second inequality we used that $|a+b|^p \le 2^{p-1} (|a|^p + |b|^p),$ which follows directly from the convexity of $f(x) = |x|^p$ for $p \ge 1.$
We next bound the four terms $\{I_i\}_{i=1}^4$. To ease notation, we define
\[
\|\widehat{k}-k\|_{\max}:=\sup_{x,x'\in D} \,|\widehat{k}(x,x')-k(x,x')|.
\]

For $I_1$, using that
\begin{align*}
\big|\widehat{k}_{\widehat{\rho}_N}(x,x')-k(x,x')\big|
\le \big|\widehat{k}_{\widehat{\rho}_N}(x,x')-\widehat{k}(x,x')\big|+\big|\widehat{k}(x,x')-k(x,x')\big|\le \widehat{\rho}_N+\|\widehat{k}-k\|_{\max},
\end{align*}
we have
\[
I_1=\E\left[\bigg(\sup_{x\in D}\int_{\Omega_x} \ \big|\widehat{k}_{\widehat{\rho}_N}(x,x')-k(x,x')\big| \, dx'\bigg)^p\right] \le \E\left[\Big(\sup_{x\in D}\mathrm{Vol}(\Omega_x)\Big)^p\left( \widehat{\rho}_N+\|\widehat{k}-k\|_{\max}\right)^p\right],
\]
where $\mathrm{Vol}(\Omega_x)$ denotes the Lebesgue measure of $\Omega_x.$
Notice that
\[
R_q^q\ge \sup_{x\in D}\int_D |k(x,x')|^q dx' \ge \sup_{x\in D}\int_{\Omega_x}|k(x,x')|^q dx'\ge \sup_{x\in D} \int_{\Omega_x}\widehat{\rho}_N^{\ q}dx'=\widehat{\rho}_N^{\ q}\sup_{x\in D} \mathrm{Vol}(\Omega_x).
\]
Combining this bound with the trivial bound $\sup_{x} \mathrm{Vol}(\Omega_x)\le \mathrm{Vol}(D)=1$ gives
\[
\sup_{x\in D} \mathrm{Vol}(\Omega_x)\le R^q_q \widehat{\rho}_N^{\,-q}\land 1.
\]
Therefore, by Cauchy-Schwarz, we have that
\begin{align}\label{eq:I_1_main}
I_1
&\le \E\left[ \big(R^q_q \widehat{\rho}_N^{\,-q}\land 1\big)^p (\widehat{\rho}_N+\|\widehat{k}-k\|_{\max})^p\right] \nonumber\\
&\le \sqrt{\E \Big[\big(R^q_q \widehat{\rho}_N^{\,-q}\land 1\big)^{2p}\Big]\ \E \Big[\big(\widehat{\rho}_N+\|\widehat{k}-k\|_{\max}\big)^{2p}\Big]}.
\end{align}

Using Lemma \ref{lemma:hat_rhoN_rhoN} and Corollary \ref{cor:max_norm_bound} yields that
\begin{align}\label{eq:I_1_aux1}
\E \Big[\big(\widehat{\rho}_N+\|\widehat{k}-k\|_{\max}\big)^{2p}\Big]\lesssim_p \E \big[\big(\widehat{\rho}_N\big)^{2p}\big] + \E \Big[\|\widehat{k}-k\|_{\max}^{2p}\Big]\lesssim_p \rho_N^{2p}.
\end{align}
On the other hand,
\begin{align*}
\E \Big[\big(R^q_q \widehat{\rho}_N^{\,-q}\land 1\big)^{2p}\Big]&= R_q^{2pq}\, \E \left[\widehat{\rho}_N^{\,-2pq}\land R_q^{-2pq}\right]= R_q^{2pq}\int_0^{\infty} \mathbb{P}\Big[\big(\widehat{\rho}_N^{\,-2pq}\land R_q^{-2pq}\big)>t\Big] dt\\
&=R_q^{2pq}\int_0^{R_q^{-2pq}}\mathbb{P}\left[\widehat{\rho}_N^{\,-2pq}>t\right] dt=2pq R_q^{2pq}\int_{R_q}^{\infty} \mathbb{P}\left[\,\widehat{\rho}_N<t\right]\ t^{-2pq-1} dt.
\end{align*}
If $R_q>\rho_N$, then 
\begin{gather}\label{eq:I_1_aux2}
\begin{split}
\E \Big[\big(R^q_q \widehat{\rho}_N^{\,-q}\land 1\big)^{2p}\Big]
&\le 2pq R_q^{2pq}\int_{\rho_N}^{\infty} t^{-2pq-1} dt= R_q^{2pq}\rho_{N}^{-2pq}.
\end{split}
\end{gather}
If $R_q<\rho_N$, then
\begin{align}\label{eq:I_1_aux3}
&\E \Big[\big(R^q_q \widehat{\rho}_N^{\,-q}\land 1\big)^{2p}\Big]
=2pq R_q^{2pq}\left(\int_{\rho_N}^{\infty}+\int_{R_q}^{\rho_N}\right) \mathbb{P}\left[\widehat{\rho}_N<t\right]\ t^{-2pq-1} dt \nonumber\\
&\le 2pq R_q^{2pq}\int_{\rho_N}^{\infty} t^{-2pq-1} dt+2pq R_q^{2pq}\int_{R_q}^{\rho_N} \mathbb{P}\left[\widehat{\rho}_N<t\right]\ t^{-2pq-1} dt \nonumber\\
&=R_q^{2pq}\rho_{N}^{-2pq}+2pq R_q^{2pq}\rho_N^{-2pq}\int_{R_q\rho_N^{-1}}^{1} \mathbb{P}\left[\widehat{\rho}_N<t\rho_N\right]\ t^{-2pq-1} dt \nonumber\\
&\overset{\text{(i)}}{\le} R_q^{2pq}\rho_{N}^{-2pq}+2pq R_q^{2pq}\rho_N^{-2pq}\int_{R_q\rho_N^{-1}}^{1} 2\exp\Big(-\frac{1}{2}(1-\sqrt{t})^2 N (\rho_N\land \,\rho_N^2)\Big) t^{-2pq-1}dt \nonumber\\
&\overset{\text{(ii)}}{=}R_q^{2pq}\rho_{N}^{-2pq}\Bigg[1+8pq\int_{0}^{\sqrt{N (\rho_N\land \,\rho_N^2)}(1-\sqrt{R_q\rho_N^{-1}})}  \frac{\big(N (\rho_N\land \,\rho_N^2)\big)^{2pq}\exp(-\frac{1}{2}t^2)}{\big(\sqrt{N (\rho_N\land \,\rho_N^2)}-t\big)^{4pq+1}}\ dt\Bigg] \nonumber\\
&\overset{\text{(iii)}}{\lesssim} R_q^{2pq}\rho_N^{-2pq}+R_q^{2pq}\rho_N^{-2pq}\cdot 8pq \bigg(\frac{2R_q^{-2pq}\rho_N^{2pq}}{4pq}e^{-\frac{1}{8}N (\rho_N\land \,\rho_N^2) \big(1-\sqrt{R_q\rho_N^{-1}}\big)^2}+\frac{2^{4pq}}{4pq}\bigg) \nonumber\\
&\lesssim_p R_q^{2pq}\rho_N^{-2pq}+ e^{-\frac{1}{8}N (\rho_N\land \,\rho_N^2) \big(1-\sqrt{R_q\rho_N^{-1}}\big)^2} \nonumber\\
&\overset{\text{(iv)}}{\lesssim}_p R_q^{2pq}\rho_N^{-2pq}+e^{-c N(\rho_N\land\, \rho_N^2)},
\end{align}
where (i) follows from Lemma \ref{lemma:hat_rhoN_rhoN}, (ii) follows by a change of variable, and (iii) follows by applying Lemma \ref{lemma:technique} in Appendix \ref{app:C} with $\alpha=\sqrt{N (\rho_N\land \,\rho_N^2)}$ and $\beta= \sqrt{N (\rho_N\land \,\rho_N^2)} \sqrt{R_q\rho_N^{-1}}$. To prove (iv), notice that if $R_q\le \frac{1}{4}\rho_N$, then $\big|1-\sqrt{R_q\rho_N^{-1}}\big|>\frac{1}{2}$ and (iv) holds; if $\frac{1}{4}\rho_N< R_q<\rho_N$, then
\[
e^{-\frac{1}{8}N (\rho_N\land \,\rho_N^2) \big(1-\sqrt{R_q\rho_N^{-1}}\big)^2}\le 1< 16^p R_q^{2p}\rho_N^{-2p}\le 16^p R_q^{2pq}\rho_N^{-2pq}.
\]

Combining the inequalities \eqref{eq:I_1_main}, \eqref{eq:I_1_aux1}, \eqref{eq:I_1_aux2}, and \eqref{eq:I_1_aux3} gives that
\[
I_1\le \sqrt{\E \Big[\big(R^q_q \widehat{\rho}_N^{\,-q}\land 1\big)^{2p}\Big]\ \E \Big[\big(\widehat{\rho}_N+\|\widehat{k}-k\|_{\max}\big)^{2p}\Big]}\lesssim_p R_q^{pq}\rho_N^{p(1-q)}+\rho_N^p e^{-c N(\rho_N\land\, \rho_N^2)}.
\]

For $I_2$ and $I_3$,
\[
\begin{split}
I_2+I_3&=\E\left[\bigg(\sup_{x\in D}\int_{\Omega^c_x} \ \left|k(x,x')\right| \,\mathbf{1}\big\{|\widehat{k}(x,x')|< \widehat{\rho}_N\big\} \,dx'\bigg)^p\right] \\
&\ +\E\left[\bigg(\sup_{x\in D}\int_{\Omega^c_x} \ |\widehat{k}(x,x')-k(x,x')| \,\mathbf{1}\big\{|\widehat{k}(x,x')|\ge \widehat{\rho}_N\big\}\,\mathbf{1}\big\{|\widehat{k}(x,x')-k(x,x')|< 4|k(x,x')|\big\} \,dx'\bigg)^p\right]\\
&\lesssim \E\left[\bigg(\sup_{x\in D}\int_{\Omega^c_x} \ \left|k(x,x')\right| \,dx'\bigg)^p\right]
= \E\left[\bigg(\widehat{\rho}_N \sup_{x\in D}\int_{\Omega^c_x} \left(\frac{|k(x,x')|}{\widehat{\rho}_N}\right) \, dx'\bigg)^p\right]\\
&\overset{\text{(i)}}{\le} \E\left[\bigg(\widehat{\rho}_N\sup_{x\in D}\int_{\Omega^c_x} \left(\frac{|k(x,x')|}{\widehat{\rho}_N}\right)^{q} dx'\bigg)^p\right]\le \E \left[R_q^{pq} \widehat{\rho}_N^{\,p(1-q)}\right]\overset{\text{(ii)}}{\lesssim}_p R_q^{pq} \rho_N^{p(1-q)},
\end{split}
\]
where (i) follows since $q\in (0,1)$ and $|k(x,x')|<\widehat{\rho}_N$ for $x'\in \Omega_x^c$. To prove (ii), we notice that if $p(1-q)\le 1$, then using Jensen's inequality and Lemma \ref{lemma:hat_rhoN_rhoN} yields that $\E[\,\widehat{\rho}_N^{\,p(1-q)}]\le (\E[\,\widehat{\rho}_N])^{p(1-q)}\lesssim_p  \rho_N^{p(1-q)}$. If $p(1-q)>1$, Lemma \ref{lemma:hat_rhoN_rhoN} implies that $\E[\,\widehat{\rho}_N^{\,p(1-q)}]\lesssim_p \rho_N^{\,p(1-q)}$.

\vspace{1em}

For $I_4$,
\begin{align*}
I_4&=\E\left[\bigg(\sup_{x\in D}\int_{\Omega^c_x} \ |\widehat{k}(x,x')-k(x,x')| \mathbf{1}\big\{|\widehat{k}(x,x')|\ge \widehat{\rho}_N\big\} \mathbf{1}\big\{|\widehat{k}(x,x')-k(x,x')|\ge 4|k(x,x')|\big\} \,dx'\bigg)^p\right]\\
&\overset{\text{(i)}}{\le} \E\left[\bigg(\sup_{x\in D}\int_{\Omega^c_x} \ |\widehat{k}(x,x')-k(x,x')| \mathbf{1}\big\{|\widehat{k}(x,x')|\ge \widehat{\rho}_N\big\} \mathbf{1}\big\{|\widehat{k}(x,x')-k(x,x')|\ge \frac{2}{3}\widehat{\rho}_N \big\} \,dx'\bigg)^p\right] \\
&\le \E\left[\bigg(\sup_{x\in D}\int_D \sup_{x\in D}\,|\widehat{k}(x,x')-k(x,x')|\, \mathbf{1}\big\{\sup_{x\in D}|\widehat{k}(x,x')-k(x,x')|\ge \frac{2}{3}\widehat{\rho}_N\big\}\,dx'\bigg)^p\right]\\
&=\E\left[\bigg(\int_D \sup_{x\in D}|\widehat{k}(x,x')-k(x,x')| \,\mathbf{1}\big\{\sup_{x\in D}|\widehat{k}(x,x')-k(x,x')|\ge \frac{2}{3}\widehat{\rho}_N\big\} \,dx'\bigg)^p\right]\\
&\le\E\left[\bigg(\|\widehat{k}-k\|_{\max}\int_D \mathbf{1}\big\{\sup_{x\in D}|\widehat{k}(x,x')-k(x,x')|\ge \frac{2}{3}\widehat{\rho}_N\big\} \,dx'\bigg)^p\right]  \\
&\le \Big(\E \big[\|\widehat{k}-k\|_{\max}^{2p}\big]\Big)^{1/2} \left(\E\bigg[ \Big(\int_D \mathbf{1}\big\{\sup_{x\in D}|\widehat{k}(x,x')-k(x,x')|\ge \frac{2}{3}\widehat{\rho}_N\big\} \,dx'\Big)^{2p}\bigg]\right)^{1/2}\\
&\overset{\text{(ii)}}{\le} \Big(\E \big[\|\widehat{k}-k\|_{\max}^{2p}\big]\Big)^{1/2} \left(\E\Big[\int_D \mathbf{1}\big\{\sup_{x\in D}|\widehat{k}(x,x')-k(x,x')|\ge \frac{2}{3}\widehat{\rho}_N\big\} \,dx'\Big]\right)^{1/2}\\
&=\Big(\E \big[\|\widehat{k}-k\|_{\max}^{2p}\big]\Big)^{1/2} \left(\int_{D}\mathbb{P}\left[ \sup_{x\in D}|\widehat{k}(x,x')-k(x,x')|\ge \frac{2}{3}\widehat{\rho}_N\right] dx'\right)^{1/2},
\end{align*}
where 
(i) follows since  $|\widehat{k}(x,x')-k(x,x')|\ge 4|k(x,x')|$ implies that $|\widehat{k}(x,x')|\ge 3|k(x,x')|,$ and therefore if  $|\widehat{k}(x,x')-k(x,x')|\ge 4|k(x,x')|$ and $|\widehat{k}(x,x')|\ge \widehat{\rho}_N,$ then it holds that
\[
|\widehat{k}(x,x')-k(x,x')|\ge |\widehat{k}(x,x')|-|k(x,x')|\ge \frac{2}{3}|\widehat{k}(x,x')|\ge \frac{2}{3}\widehat{\rho}_N.
\]
To prove (ii), note that $p\ge 1$ and
$
\int_D \mathbf{1}\left\{\sup_{x\in D}|\widehat{k}(x,x')-k(x,x')|\ge \frac{2}{3}\widehat{\rho}_N\right\} \,dx'\le 1.
$
Next, notice that
\begin{align*}
\mathbb{P}\left[ \sup_{x\in D}|\widehat{k}(x,x')-k(x,x')|\ge \frac{2}{3}\widehat{\rho}_N\right]&= \mathbb{P}\left[ \frac{2}{3}(\,\rho_N-\widehat{\rho}_N) +\sup_{x\in D}|\widehat{k}(x,x')-k(x,x')|\ge \frac{2}{3}\rho_N\right]\\
&\le \mathbb{P}\left[\sup_{x\in D}|\widehat{k}(x,x')-k(x,x')|\ge \frac{1}{3}\rho_N\right]+\mathbb{P}\Big[\rho_N-\widehat{\rho}_N \ge\frac{1}{2}\rho_N \Big].
\end{align*}
Lemma \ref{lemma:hat_rhoN_rhoN} then implies that
\[
\mathbb{P}\Big[\rho_N-\widehat{\rho}_N \ge\frac{1}{2}\rho_N \Big]= \mathbb{P}\Big[\widehat{\rho}_N \le\frac{1}{2}\rho_N \Big]\lesssim e^{-c_1 N(\rho_N\land\, \rho_N^2)},
\]
and  Proposition \ref{prop:single_supremum} gives that
$$
\mathbb{P}\left[\sup_{x\in D}|\widehat{k}(x,x')-k(x,x')|\ge \frac{1}{3}\rho_N\right]\lesssim e^{-c_2 N(\rho_N\land\, \rho_N^2)}.$$
Moreover, Corollary \ref{cor:max_norm_bound} yields that $\Big(\E\big[ \|\widehat{k}-k\|_{\max}^{2p}\big]\Big)^{1/2}\lesssim_p \rho_N^p$. Therefore,
\begin{align*}
I_4&\le \Big(\E \big[\|\widehat{k}-k\|_{\max}^{2p}\big]\Big)^{1/2} \left(\int_{D}\mathbb{P}\left[ \sup_{x\in D}|\widehat{k}(x,x')-k(x,x')|\ge \frac{2}{3}\widehat{\rho}_N\right] dx'\right)^{1/2}
\lesssim_p \rho_N^p e^{-c N(\rho_N\land\, \rho_N^2)}.
\end{align*}

Combining \eqref{eq:thm22_main} with the estimates of $I_1, I_2,I_3$, and $I_4$ gives that 
\[
\E \|\widehat{\CC}_{\widehat{\rho}_N}-\CC\|^p\lesssim_p I_1+I_2+I_3+I_4\lesssim_p R_q^{pq} \rho_N^{p(1-q)}+\rho_N^p e^{-c N(\rho_N\land\, \rho_N^2)},
\]
and hence
\[
\big[\E \|\widehat{\CC}_{\widehat{\rho}_N}-\CC\|^p\big]^{\frac{1}{p}} \lesssim_p R_q^q\rho_{N}^{1-q}+\rho_Ne^{-\frac{c}{p} N(\rho_N\land\, \rho_N^2)}. \qedhere
\] 
\end{proof}

\section{Small Lengthscale Regime}\label{sec:smalllengthscale}
This section studies thresholded estimation of covariance operators under the small lengthscale regime formalized in Assumption \ref{assumption:main2}. We first present three lemmas which establish the sharp scaling of the $L^q$-sparsity level, the operator norm of the covariance operator, and the suprema of Gaussian fields in the small lengthscale regime. 
Combining these lemmas and Theorem \ref{thm:mainresult}, we then prove Theorem \ref{thm:smalllengthscale}. Throughout this section, we use the notation ``$(\mathcal{B}), \, \lambda\to 0$'' to indicate that there is a universal constant $\lambda_0>0$ such that if $\lambda<\lambda_0$, the conclusion $(\mathcal{B})$ holds.

The following result establishes the scaling of the $L^q$-sparsity level in the small lengthscale regime.
\begin{lemma}\label{lemma:R_q^q_bound}
Under Assumption \ref{assumption:main2}, it holds that 
\begin{equation*}
    \sup_{x\in D}\int_{D} |k(x,x')|^q dx' \asymp \lambda^{d}A(d)\int_0^{\infty} k_1(r)^q r^{d-1} dr, \quad \lambda\to 0,
\end{equation*}
where $A(d)$ denotes the surface area of the unit sphere in $\R^d$.
\end{lemma}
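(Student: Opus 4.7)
The plan is to reduce the integral over the cube $D$ to a radial integral over all of $\mathbb{R}^d$ by exploiting the translation and scaling invariance of the isotropic kernel, and then to argue that truncation to $D$ is harmless when $\lambda$ is small.

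First I would unpack Assumption~\ref{assumption:main2}\ref{assumption:smalllengthscalelimit} into the more convenient form $k_\lambda(r) = k_1(r/\lambda)$, obtained by taking $\alpha = \lambda$ in the identity $k_\lambda(\alpha r) = k_{\lambda \alpha^{-1}}(r)$. For the upper bound I would fix $x \in D$ and change variables $y = x'-x$ in $\int_D k(|x-x'|)^q dx'$. Since $k > 0$ by Assumption~\ref{assumption:main2}\ref{assumption:fieldandkernel}, extending the domain of integration from $D - x$ to $\mathbb{R}^d$ only enlarges the integral. A second substitution $z = y/\lambda$ and a pass to polar coordinates then give
\begin{equation*}
\int_D k(|x-x'|)^q\, dx' \ \le\ \lambda^d \int_{\mathbb{R}^d} k_1(|z|)^q\, dz \ =\ \lambda^d A(d) \int_0^{\infty} k_1(r)^q r^{d-1}\, dr.
\end{equation*}
This bound is uniform in $x$, giving the upper side of $\asymp$ with constant $1$ for all $\lambda > 0$.

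For the matching lower bound I would evaluate the integrand at the single interior point $x_0 = (1/2,\ldots,1/2)$, noting that $B(x_0, 1/2) \subset D$. The same chain of substitutions yields
\begin{equation*}
\sup_{x \in D}\int_D k(|x - x'|)^q\, dx' \ \ge\ \int_{B(x_0,1/2)} k_1(|x_0 - x'|/\lambda)^q\, dx' \ =\ \lambda^d A(d) \int_0^{1/(2\lambda)} k_1(r)^q r^{d-1}\, dr.
\end{equation*}
By monotone convergence the truncated radial integral on the right tends to $\int_0^\infty k_1(r)^q r^{d-1}\,dr$ as $\lambda \to 0$, so it exceeds, say, half of the full integral once $\lambda$ is smaller than some universal $\lambda_0$. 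Combined with the upper bound this yields the claimed $\asymp$.

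The main (and essentially only) subtlety is that the asymptotic equivalence only makes sense once we know that $I := \int_0^\infty k_1(r)^q r^{d-1}\,dr$ is finite. This is not a direct consequence of Assumption~\ref{assumption:main2} alone, which only forces $k_1(r) \to 0$. However, finiteness of $I$ is implicitly delivered by the $L^q$-sparsity hypothesis Assumption~\ref{assumption:main1}\ref{assumption:sparsity}: applying the lower-bound construction above at the single value of $\lambda$ at which $R_q$ is assumed finite already forces $I < \infty$. I would record this compatibility check at the start of the proof so that the statement $\asymp$ is well posed, and then carry out the two change-of-variables computations described above.
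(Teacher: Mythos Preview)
Your argument is correct. The upper bound matches the paper's exactly. For the lower bound you take a different route: the paper bounds $\sup_x \int_D k^q$ from below by the average $\int_{D\times D} k(x,x')^q\,dx\,dx'$ (using $\mathrm{Vol}(D)=1$), then carries out the change of variables $w=x-x'$, $z=x+x'$ over the full cube, integrates out $z$ to obtain the factor $\prod_{j=1}^d(1-\lambda|w_j|)$, and applies dominated convergence as $\lambda\to0$. Your device of evaluating at the center $x_0$ with $B(x_0,1/2)\subset D$ is more elementary: it replaces dominated by monotone convergence and avoids the product factor altogether. Both routes give the same conclusion; yours is arguably cleaner for this lemma in isolation, while the paper's double-integral computation is reused verbatim (with $q=1$) in the proof of Lemma~\ref{lemma:operator_norm_C}, so their formulation has some economy across the section. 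Your explicit observation that finiteness of $\int_0^\infty k_1(r)^q r^{d-1}\,dr$ is really delivered by Assumption~\ref{assumption:main1}\ref{assumption:sparsity} rather than Assumption~\ref{assumption:main2} alone is a point the paper leaves implicit.
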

\begin{proof}
 Using $k_{\lambda}(r)=k_{1}(\lambda^{-1}r)$, we have that
\begin{align}\label{eq:Lq_Sparsity_aux1}
\sup_{x\in D}\int_{D}& |k(x,x')|^q dx' \ge \int_{D\times D} k(x,x')^q dxdx'
=\int_{[0,1]^d\times[0,1]^d} k_{\lambda}(|x-x'|)^q \,dxdx' \nonumber\\
&=\int_{[0,1]^d\times[0,1]^d} k_1(\lambda^{-1}|x-x'|)^q \,dxdx' 
=\lambda^{2d}\int_{[0,\lambda^{-1}]^d\times [0,\lambda^{-1}]^d} k_1(|x-x'|)^q\,dxdx'\nonumber\\
&\overset{(\text{i})}{=} \lambda^{2d}\int_{[-\lambda^{-1},\lambda^{-1}]^d} k_1(|w|)^q \prod_{j=1}^{d}(\lambda^{-1}-|w_j|) \,dw \nonumber\\
&=\lambda^{d}\int_{[-\lambda^{-1},\lambda^{-1}]^d} k_1(|w|)^q \prod_{j=1}^{d}(1-\lambda|w_j|) \,dw\nonumber\\
&\overset{(\text{ii})}{\asymp} \lambda^{d}\int_{\R^d} k_1(|w|)^q \,dw  \overset{(\text{iii})}{=}\lambda^{d}A(d)\int_0^{\infty} k_1(r)^q r^{d-1}\,dr,\quad \lambda\to 0,
\end{align}
where (i) follows by a change of variables $w=x-x', z=x+x'$ and integrating $z$, (ii) follows by dominated convergence as $\lambda\to 0$, and (iii) follows from the polar coordinate transform in $\R^d$. On the other hand,
\[
\begin{split}
    \sup_{x\in D}\int_{D} |k(x,x')|^q dx'&\le \sup_{x\in D}\int_{\R^d} k(|x-x'|)^q dx'\\
    &=\int_{\R^d} k(|x'|)^q \,dx'=\lambda^{d}\int_{\R^d} k_1(|x'|)^q\,dx'=\lambda^{d}A(d)\int_0^{\infty} k_1(r)^q r^{d-1}\,dr,
\end{split}
\]
which concludes the proof.
\end{proof}

Next, we establish the scaling of the operator norm of the covariance operator.
\begin{lemma}\label{lemma:operator_norm_C}
    Under Assumption \ref{assumption:main2}, it holds that 
    \[
    \|\CC\|\asymp \lambda^{d}A(d)\int_0^{\infty} k_1(r)r^{d-1}dr,\quad \lambda\to 0,
    \]
    where $A(d)$ denotes the surface area of the unit sphere in $\R^d$.
\end{lemma}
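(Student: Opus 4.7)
The plan is to establish matching upper and lower bounds for $\|\CC\|$ in terms of $\lambda^{d}A(d)\int_0^{\infty} k_1(r)r^{d-1}dr$, reusing the change-of-variables argument already developed in Lemma \ref{lemma:R_q^q_bound} with the exponent $q=1$.

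\textbf{Upper bound via Schur's test.} Since $k>0$ by Assumption \ref{assumption:main2} \ref{assumption:fieldandkernel} and $k$ is symmetric, the Schur (or Young) inequality for integral operators gives
\[
\|\CC\| \le \sup_{x\in D}\int_D k(x,x')\,dx'.
\]
Applying $k_\lambda(r)=k_1(\lambda^{-1}r)$, extending integration to $\R^d$, and passing to polar coordinates,
\[
\sup_{x\in D}\int_D k(x,x')\,dx' \le \int_{\R^d} k_1(\lambda^{-1}|x'|)\,dx' = \lambda^d \int_{\R^d} k_1(|w|)\,dw = \lambda^{d} A(d)\int_0^{\infty} k_1(r) r^{d-1}\,dr,
\]
which gives the upper bound valid for every $\lambda>0$.

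\textbf{Lower bound via a constant test function.} Since $\CC$ is self-adjoint positive semi-definite, $\|\CC\|\ge \langle \CC \psi,\psi\rangle / \|\psi\|_{L^2}^2$ for any nonzero $\psi \in L^2(D)$. Choosing $\psi=\mathbf{1}_D$ (so $\|\psi\|_{L^2}^2=1$ since $|D|=1$) yields
\[
\|\CC\| \ge \int_{D\times D} k(x,x')\,dx\,dx'.
\]
Now I reuse verbatim the computation \eqref{eq:Lq_Sparsity_aux1} from Lemma \ref{lemma:R_q^q_bound} with $q=1$: a change of variables $w=x-x'$, $z=x+x'$ (integrating out $z$ produces the product $\prod_j(1-\lambda|w_j|)$), followed by dominated convergence as $\lambda\to 0$ and the polar transform, gives
\[
\int_{D\times D} k(x,x')\,dx\,dx' \asymp \lambda^{d}\int_{\R^d} k_1(|w|)\,dw = \lambda^{d} A(d)\int_0^{\infty} k_1(r)r^{d-1}\,dr,\quad \lambda\to 0.
\]
Combining the upper and lower bounds yields the claimed asymptotic equivalence.

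\textbf{Main obstacle.} The essential difficulty is ensuring the lower bound has the right order in $\lambda$: a naive bound via a localized test function would lose the factor $\lambda^d$, so the choice $\psi=\mathbf{1}_D$ is crucial, as it integrates $k$ over all of $D\times D$ and thereby samples the full ``volume'' of the kernel. For the dominated-convergence step in the lower bound one needs $k_1 \in L^1(\R^d)$, which is implicit in the finiteness of the right-hand side and follows from Assumption \ref{assumption:main2} \ref{assumption:fieldandkernel} combined with the hypothesis that $k_1$ is integrable with weight $r^{d-1}$ (otherwise the statement of the lemma is vacuous). Everything else is a direct reuse of machinery already in place in Lemma \ref{lemma:R_q^q_bound}.
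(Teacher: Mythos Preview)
Your proposal is correct and follows essentially the same approach as the paper: Schur's bound for the upper estimate and the constant test function $\psi=\mathbf{1}_D$ for the lower estimate, both reducing to the computation \eqref{eq:Lq_Sparsity_aux1} with $q=1$. The only cosmetic difference is that you invoke the Rayleigh quotient $\|\CC\|\ge\langle\CC\psi,\psi\rangle$ directly (using that $\CC$ is self-adjoint positive semi-definite), whereas the paper bounds $\|\CC\|\ge\|\CC\psi\|_{L^2}$ and then applies Cauchy--Schwarz to reach the same double integral; your route is marginally cleaner.
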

\begin{proof}
First, the operator norm can be upper bounded by
\[
\|\CC \| \le \sup_{x\in D} \int_{D} |k(x,x')|dx'\asymp \lambda^{d}A(d)\int_0^{\infty} k_1(r)r^{d-1}dr,\quad \lambda\to 0,
\]
where the last step follows by Lemma \ref{lemma:R_q^q_bound}. 
    
For the lower bound, taking the test function $\psi(x) \equiv 1$ yields that
$$
\begin{aligned}
\|\CC\| &=\sup_{\|\psi\|_{L^2}=1}\Big(\int_{D}\Big(\int_D k(x,x')\psi(x') dx'\Big)^2 dx\Big)^{1/2}
 \ge \Big(\int_D\Big(\int_D k(x,x') d x^{\prime}\Big)^2 d x\Big)^{1/2} \\
&\overset{\text{(i)}}{\ge} \frac{1}{\sqrt{\mathrm{Vol}(D)}}\int_{D\times D} k(x,x')dxdx'
\overset{\text{(ii)}}{=}\int_{D\times D} k(x,x')dxdx'\\
&\overset{\text{(iii)}}{\asymp} \lambda^{d}A(d)\int_0^{\infty} k_1(r) r^{d-1}\,dr,\quad \lambda\to 0,
\end{aligned}
$$
where (i) follows by Cauchy-Schwarz inequality, (ii) follows since $\mathrm{Vol}(D) =1$ for $D=[0,1]^d$, and (iii) follows from \eqref{eq:Lq_Sparsity_aux1} with $q=1$. This completes the proof.
\end{proof}

Finally, we establish the scaling of the suprema of Gaussian fields in the small lengthscale regime.
\begin{lemma}\label{lemma:sup_bound}
Under Assumption \ref{assumption:main2} \ref{assumption:fieldandkernel}, it holds that
\[
\E \left[ \,\sup_{x \in D} u(x) \right] \asymp \sqrt{d}\int_0^{\infty}\sqrt{k(0)-k(c\sqrt{d}e^{-t^2})} \ dt,
\]
where $c$ is an absolute constant. Furthermore, if Assumption \ref{assumption:main2} \ref{assumption:smalllengthscalelimit} also holds, then
\[
\E \left[ \,\sup_{x \in D} u(x) \right] \asymp \sqrt{k(0)d\log \Big(\frac{\sqrt{d}}{s\lambda}\Big)}\,\, ,\quad  \lambda \to 0,
\]
where $s>0$ is the unique solution of $k_{1}(s)=\frac{1}{2}k(0),$ which is independent of $\lambda$.

\end{lemma}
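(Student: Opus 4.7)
The plan is to prove the first claim by estimating $\E\sup_{x\in D} u(x)$ via Dudley's entropy integral with respect to the canonical Gaussian pseudo-metric
\[
\mathsf{d}(x,x') := \sqrt{\E[(u(x)-u(x'))^2]} = \sqrt{2(k(0)-k(|x-x'|))},
\]
and then to deduce the second claim by analyzing the resulting integral in the $\lambda\to 0$ limit.

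For the upper bound in the first claim, I invoke Dudley's inequality $\E\sup_{x\in D} u(x) \lesssim \int_0^\infty \sqrt{\log N(D,\mathsf{d},\eps)}\,d\eps$. Strict monotonicity of $k$ (Assumption \ref{assumption:main2}\ref{assumption:fieldandkernel}) implies that a $\mathsf{d}$-ball of radius $\eps$ equals a Euclidean ball of radius $r(\eps) := k^{-1}(k(0)-\eps^2/2)$, so a standard volumetric argument on $[0,1]^d$ gives $\log N(D,\mathsf{d},\eps) \asymp d\log(c\sqrt{d}/r(\eps))$ for some absolute constant $c$, whenever $r(\eps)\le c\sqrt{d}$. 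Integrating by parts with $u=\sqrt{d\log(c\sqrt{d}/r(\eps))}$, $v=\eps$ (boundary terms vanish since $u=0$ at $\eps=\mathrm{diam}(D,\mathsf{d})$ and $\eps\, u\to 0$ as $\eps\to 0$ by local smoothness of $k$ at the origin), and then changing variables $r = c\sqrt{d}\, e^{-t^2}$ (so $\sqrt{\log(c\sqrt{d}/r)}=t$ and $dr/r = -2t\,dt$), the Dudley integral transforms exactly into the target form $\sqrt{d}\int_0^\infty \sqrt{k(0)-k(c\sqrt{d}\,e^{-t^2})}\,dt$ up to absolute constants. For the matching lower bound I invoke Sudakov minoration combined with the stationarity of the field: at each dyadic scale $\eps_j$ one extracts a maximal $\eps_j$-separated net of size $\asymp N(D,\mathsf{d},\eps_j)$ and applies Sudakov; the uniform geometry of $[0,1]^d$ under an isotropic kernel ensures that the resulting dyadic sum reproduces the full Dudley integral rather than only its supremum.

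For the second claim, Assumption \ref{assumption:main2}\ref{assumption:smalllengthscalelimit} rewrites the integrand as $\sqrt{k(0)-k_1(c\sqrt{d}\,e^{-t^2}/\lambda)}$. Let $s>0$ be the unique value with $k_1(s)=k(0)/2$ and set $t_\star := \sqrt{\log(c\sqrt{d}/(s\lambda))}$, which is precisely the $t$ at which $c\sqrt{d}\,e^{-t^2}/\lambda = s$. For $t\in[0,t_\star]$ the argument exceeds $s$, so $k_1\le k(0)/2$ by monotonicity, the integrand is $\Theta(\sqrt{k(0)})$, and this bulk interval contributes $\Theta(\sqrt{k(0)}\,t_\star)=\Theta\bigl(\sqrt{k(0)\log(\sqrt{d}/(s\lambda))}\bigr)$ once the absolute constant $c$ is absorbed. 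For $t>t_\star$ the argument drops below $s$, $k_1$ returns toward $k(0)$, and a Taylor expansion of $k_1$ at the origin yields an integrand of order $\sqrt{d/\lambda}\,e^{-t^2/2}$ or smaller; integrating from $t_\star$ to infinity and using $e^{-t_\star^2/2}=\sqrt{s\lambda/(c\sqrt{d})}$ shows the tail is $O(1/t_\star)$, hence negligible relative to the bulk as $\lambda\to 0$. Multiplying by $\sqrt{d}$ yields the claimed equivalence.

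The main obstacle is the lower bound in the first claim, since Dudley's inequality is only one-sided: matching it up to absolute constants requires a careful dyadic Sudakov argument that exploits the stationarity and isotropy of the field on $D=[0,1]^d$, so that single-scale contributions sum up to reconstruct the full entropy integral rather than merely its supremum. A secondary technical point is verifying the tail estimate for $t>t_\star$ in the second claim uniformly in $\lambda$; this requires only the continuity and monotonic decay of $k_1$ at the origin from Assumption \ref{assumption:main2}\ref{assumption:fieldandkernel}, combined with the Gaussian factor $e^{-t^2}$ which confines the tail once $t$ exceeds $t_\star$.
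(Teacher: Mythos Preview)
Your overall architecture matches the paper's: use the canonical metric $\mathsf{d}$, reduce to an entropy integral, bound the covering numbers by a volume argument on $[0,1]^d$, and change variables $r = c\sqrt{d}\,e^{-t^2}$ to reach the claimed integral; then split at $t_\star=\sqrt{\log(c\sqrt{d}/(s\lambda))}$ and bound the bulk and tail separately. The tail estimate you sketch is essentially identical to the paper's $I_2$ bound.

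The real discrepancy is your lower bound for the first claim. The paper does \emph{not} combine Dudley with Sudakov; it invokes Fernique's theorem (see Fernique 1975, or van Handel, Theorem 6.19), which says that for a \emph{stationary} Gaussian process the entropy integral is two-sided:
\[
\E\Bigl[\sup_{x\in D} u(x)\Bigr]\ \asymp\ \int_0^\infty \sqrt{\log \mathcal{M}(D,\mathsf{d},\eps)}\,d\eps.
\]
Your proposed substitute---apply Sudakov minoration at each dyadic scale $\eps_j$ and then sum---does not work as stated. Sudakov minoration yields $\E[\sup u]\gtrsim \eps_j\sqrt{\log N(D,\mathsf{d},\eps_j)}$ for \emph{each} $j$, but all of these are lower bounds on the \emph{same} quantity; you obtain only $\sup_j \eps_j\sqrt{\log N(\eps_j)}$, not the sum, and in general this gap is precisely the difference between Sudakov and Dudley. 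Stationarity does close this gap, but not by the mechanism you describe: the proof of Fernique's theorem builds a chain in which increments across scales decouple via translation invariance, so their contributions can be accumulated; this is a genuine argument, not a repeated application of Sudakov. If you want to avoid citing Fernique as a black box, you must reproduce that construction, which is considerably more than ``dyadic Sudakov.''

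In short: replace your lower-bound paragraph with a direct appeal to Fernique's theorem for stationary Gaussian fields. Everything else in your proposal is in line with the paper's proof.
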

\begin{proof}
By Fernique's theorem \cite{fernique1975regularite} and the discussion in \cite[Theorem 6.19]{van2014probability}, for the stationary Gaussian random field $u,$ it holds that 
\begin{equation}\label{eq:Fernique}
     \E \left[ \,\sup_{x \in D} u(x) \right]\asymp \int_0^{\infty} \sqrt{\log\mathcal{M}(D,\mathsf{d},\varepsilon)} \ d\varepsilon,
\end{equation}
where $\mathcal{M}(D,\mathsf{d},\varepsilon)$ denotes the smallest cardinality of an $\varepsilon$-net of $D$ in the canonical metric $\mathsf{d}$ given by  
\[\mathsf{d}(x,x') :=\big(\E [(u(x)-u(x'))^2]\big)^{1/2}=\sqrt{2k(0)-2k(|x-x'|)}< \sqrt{2k(0)},\quad x,x'\in D.
\] 
Since the Gaussian field is assumed to be isotropic under Assumption \ref{assumption:main2} \ref{assumption:fieldandkernel}, the field is necessarily stationary. Consequently, Fernique's bound implies that  $\mathcal{M}(D,\mathsf{d},\varepsilon) = 1$ for $\varepsilon \ge \sqrt{2k(0)},$ and hence we can assume without loss of generality that $\varepsilon<\sqrt{2k(0)}$ in the rest of the proof. Next, notice that
\[
\mathsf{d}(x,x')=\sqrt{2k(0)-2k(|x-x'|)} \le \varepsilon \quad \Longleftrightarrow \quad |x-x'|\le k^{-1}(k(0)-\varepsilon^2/2),
\]
where $k^{-1}$ is the inverse function of $k$.
By the standard volume argument \cite[Proposition 4.2.12]{vershynin2018high},
\[
\begin{split}
\mathcal{M}(D,\mathsf{d},\varepsilon)&=\mathcal{M}\bigl(D,|\cdot|,k^{-1}(k(0)-\varepsilon^2/2)\bigr)\\
&\ge \left(\frac{1}{k^{-1}(k(0)-\varepsilon^2/2)}\right)^d\frac{\mathrm{Vol}(D)}{\mathrm{Vol}(B_2^d)}\ge \frac{1}{c_1 }\left(\frac{1}{k^{-1}(k(0)-\varepsilon^2/2)}\right)^d \left(\frac{d}{2\pi e}\right)^{d/2},
\end{split}
\]
where we used that $\mathrm{Vol}(D)=1$ and that, for the Euclidean unit ball $B_2^d,$ it holds that $\mathrm{Vol}(B_2^d)\le c_1 (2\pi e/d)^{d/2}$ for some absolute constant $c_1 >1$. On the other hand, using the fact that $D=[0,1]^d\subset \sqrt{d}B_2^d$, as well as $\mathcal{M}(B_2^d,|\cdot|,\varepsilon)\le (3/\varepsilon)^{d}$ for $\varepsilon\le 1$ \cite[Corollary 4.2.13]{vershynin2018high},
\[
\begin{split}
&\mathcal{M}(D,\mathsf{d},\varepsilon)=\mathcal{M} \bigl(D,|\cdot|,k^{-1}(k(0)-\varepsilon^2/2)\bigr)\\
&\le \mathcal{M}\bigl(B_2^d,|\cdot|, d^{-1/2} k^{-1}(k(0)-\varepsilon^2/2)\bigr)\le \left[\left(\frac{3}{k^{-1}(k(0)-\varepsilon^2/2)}\right)^d d^{\,d/2}\right] \lor 1, \quad \varepsilon < \sqrt{2 k(0)}.
\end{split}
\]
Therefore, \eqref{eq:Fernique} and the bounds we just established on the covering number $\mathcal{M}(D,\mathsf{d},\varepsilon)$ imply that
\[
\begin{split}
    \E \left[ \,\sup_{x \in D} u(x) \right]
    &\asymp \int_0^{\sqrt{2k(0)}}\sqrt{\log\left(\frac{1}{c_1 }\left(\frac{1}{k^{-1}(k(0)-\varepsilon^2/2)}\right)^d \left(\frac{d}{2\pi e}\right)^{d/2}\right) \vee 0 } \ d\varepsilon\\
    &\asymp \sqrt{d}\int_0^{\sqrt{2k(0)}}\sqrt{\log\left(\frac{c\sqrt{d}}{k^{-1}(k(0)-\varepsilon^2/2)} \right) \vee 0 } \ d\varepsilon.
\end{split}
\]
By a change of variable $t:=\sqrt{\log\left(\frac{c\sqrt{d}}{k^{-1}(k(0)-\varepsilon^2/2)}\right)}$, then $\varepsilon=\sqrt{2\left(k(0)-k(c\sqrt{d}e^{-t^2})\right)}$ and 
\[
\begin{split}
\E \left[ \,\sup_{x \in D} u(x) \right] &\asymp \sqrt{d}\int_0^{\infty} -t\, \frac{d}{dt}\left(\sqrt{k(0)-k(c\sqrt{d}e^{-t^2})}\right) \ dt\\
&= \sqrt{d}\left( -t\sqrt{k(0)-k(c\sqrt{d}e^{-t^2})} \,\Big|_{0}^{\infty}+\int_0^{\infty}\sqrt{k(0)-k(c\sqrt{d}e^{-t^2})} \ dt\right)\\
&=\sqrt{d}\int_0^{\infty}\sqrt{k(0)-k(c\sqrt{d}e^{-t^2})} \ dt,
\end{split}
\]
where in the last equality we used that $k(0)-k(c\sqrt{d}e^{-t^2})\asymp c\sqrt{d}e^{-t^2}$ as $t\to\infty$ since $k(r)$ is assumed to be differentiable at $r=0$. 

Furthermore, if Assumption \ref{assumption:main2}  \ref{assumption:smalllengthscalelimit} also holds, let $s>0$ be the unique solution of $k_{1}(s)=\frac{1}{2}k(0),$ which is independent of $\lambda$. Then, for $\lambda< c\sqrt{d}/s$,
\[
\begin{split}
   \E \left[ \,\sup_{x \in D} u(x) \right] &\asymp \sqrt{d}\int_0^{\infty}\sqrt{k(0)-k_{\lambda}(c\sqrt{d}e^{-t^2})} \ dt
    =\sqrt{d}\int_0^{\infty}\sqrt{k(0)-k_{1}(c\lambda^{-1}\sqrt{d}e^{-t^2})} \ dt\\
    &=\sqrt{d}\left[\int_{t<\sqrt{\log \big(\frac{c\sqrt{d}}{s \lambda }\big)}}+\int_{t>\sqrt{\log \big(\frac{c\sqrt{d}}{s \lambda }\big)}}\right] \sqrt{k(0)-k_{1}(c\lambda^{-1}\sqrt{d}e^{-t^2})} \ dt 
    =:I_1+I_2.
\end{split}
\]

For the first term $I_1$, we have
\begin{align*}
 \sqrt{\frac{k(0)d}{2}}\sqrt{\log \Big(\frac{c\sqrt{d}}{s\lambda}\Big)}
 \le 
 I_1 
 \le    
 \sqrt{k(0)d}\sqrt{\log \Big(\frac{c\sqrt{d}}{s\lambda}\Big)}.
\end{align*}
Therefore, for any $\lambda < c\sqrt{d}/s$, $
I_1\asymp \sqrt{k(0)d\log \Big(\frac{\sqrt{d}}{s\lambda}\Big)}$\hspace{0.005cm} .

To bound the second term $I_2$, we notice that there is some constant $M>0$ such that $k(0)-k_1(r)\le M\, r$ for $r\in [0,s]$, where $M$ is independent of $\lambda$. Therefore,  
\begin{align*}
    I_2&=\sqrt{d}\int_{t>\sqrt{\log \big(\frac{c\sqrt{d}}{s \lambda }\big)}} \sqrt{k(0)-k_{1}(c\lambda^{-1}\sqrt{d}e^{-t^2})} \ dt
    \le \sqrt{d}\int_{t>\sqrt{\log \big(\frac{c\sqrt{d}}{s \lambda }\big)}}
    \sqrt{M} \left(c\lambda^{-1}\sqrt{d}e^{-t^2}\right)^{\frac{1}{2}} \ dt \\
    &\lesssim d^{3/4}\lambda^{-\frac{1}{2}} \int_{t>\sqrt{\log \big(\frac{c\sqrt{d}}{s \lambda}\big)}} e^{-\frac{1}{2}t^2} \ dt
    \lesssim \sqrt{d}\,\left(\log \Big(\frac{c\sqrt{d}}{s\lambda}\Big)\right)^{-\frac{1}{2}}\to 0,\quad \lambda\to 0,
\end{align*}
where we used the tail bound of the Gaussian distribution $\int_x^{\infty} e^{-\frac{1}{2}t^2} dt \le \frac{1}{x}e^{-\frac{1}{2}x^2}$ for $x>0$. Since $I_2 \ge 0,$ we therefore have that $I_2\lesssim \sqrt{d} \left(\log \big(\frac{c\sqrt{d}}{\lambda}\big)\right)^{-\frac{1}{2}}\to 0$ as $\lambda \to 0$. 
Consequently,
\[
 \E \left[ \,\sup_{x \in D} u(x) \right] \asymp I_1+I_2\asymp \sqrt{k(0)d\log \Big(\frac{\sqrt{d}}{s\lambda}\Big)}\, \, ,\quad \lambda\to 0. \qedhere
\]
\end{proof}


\begin{remark}\label{remark:heuristicsmalllengthscale}
    Lemma \ref{lemma:sup_bound} admits a clear heuristic interpretation. Consider a uniform mesh $\mathcal{P}$ of the unit cube $D = [0,1]^d$ comprising $(1/\lambda)^d$ points that are distance $\lambda$ apart. For a random field $u(x)$ with lengthscale $\lambda,$ the values $u(x_i)$ and $u(x_j)$ at mesh points $x_i \neq x_j \in \mathcal{P}$ are roughly uncorrelated. Thus, $\{ u(x_i) \}_{i=1}^{\lambda^{-d}}$ are roughly i.i.d. univariate Gaussian random variables, and, for small $\lambda,$ we may approximate
    \begin{equation*}
        \E \left[ \,\sup_{x \in D} u(x) \right] \approx   \E \left[ \,\sup_{x_i \in \mathcal{P}} u(x_i) \right] \approx \sqrt{\log (\lambda^{-d})}. 
    \end{equation*}
 This heuristic derivation matches the scaling of the expected supremum with $\lambda$ in Lemma \ref{lemma:sup_bound}.  \qed
\end{remark}

We are now ready to prove Theorem \ref{thm:smalllengthscale}.
\begin{proof}[Proof of Theorem \ref{thm:smalllengthscale}]
In this proof we treat $d$ as a constant. Notice that under Assumption \ref{assumption:main1} \ref{assumption:magnitude} and Assumption \ref{assumption:main2} \ref{assumption:fieldandkernel}, it holds that $\mathrm{Tr}(\CC)= \int_{D}k(x,x)\,dx=k(0) \mathrm{Vol}(D)=1$. Moreover, Lemma \ref{lemma:operator_norm_C} shows that $\|\CC\|\asymp \lambda^d$ as $\lambda\to 0$. Plugging the former two results into \eqref{eq:Koltchinksiibound} yields the bound \eqref{eq:smalllengthscalewithoutthresholding}.
For the thresholded estimator, we apply Theorem \ref{thm:mainresult} with an appropriate choice of the constant $c_0 \in [1, \sqrt{N}].$
By Lemma \ref{lemma:sup_bound}, $\E [\sup_{x \in D} u(x)]\asymp \sqrt{\log(\lambda^{-d})}$ as $\lambda\to 0$. We assume that $N\ge c_0^2\,(\E[\sup_{x \in D} u(x)])^2\asymp \log (\lambda^{-d})$, so that the thresholding parameter satisfies
\[
\rho_N=\frac{c_0}{\sqrt{N}} \E\Big[\sup_{x\in D} u(x)\Big]\le 1.
\]
It follows that
\begin{gather}\label{eq:thm2_aux1}
\begin{split}
\rho_Ne^{-cN(\rho_N\land\, \rho_N^2)}=\rho_N e^{-cN\rho_N^2}&=\rho_N e^{-cc_0^2(\E[\sup_{x \in D} u(x)])^2}\\
&=\rho_N e^{-cc'c_0^2 d\log (1/\lambda)}=\rho_N \lambda^{cc'c_0^2d}\le \rho_{N}^{1-q}\lambda^{cc'c_0^2d},
\end{split}
\end{gather}
where $c'$ is an absolute constant. On the other hand, using Lemma \ref{lemma:R_q^q_bound} we have that
\begin{equation}\label{eq:thm2_aux2}
R_q^q\rho_N^{1-q}\asymp \rho_N^{1-q}\lambda^d  A(d)\int_0^{\infty} k_1(r)^q r^{d-1}dr.
\end{equation}
Comparing \eqref{eq:thm2_aux1} with \eqref{eq:thm2_aux2}, we see that if $c_0$ is chosen so that $cc'c_0^2>1$, then the upper bound
$R_q^q\rho_N^{1-q}+\rho_Ne^{-cN(\rho_N\land\, \rho_N^2)}$ in Theorem \ref{thm:mainresult} is dominated by $R_q^q\rho_N^{1-q}$ as $\lambda\to 0$. Therefore, for sufficiently small $\lambda$,
\begin{align*}
\mathbb{E} \| \widehat{\CC}_{\widehat{\rho}_N} - \CC \| \lesssim R_q^q\rho_N^{1-q} \le \|\CC\| \,c(q) \bigg(\frac{\log(\lambda^{-d})}{N}\bigg)^{\frac{1-q}{2}},
\end{align*}
where $c(q)$ is a constant that only depends on $q$.
\end{proof}

\section{Application in Ensemble Kalman Filters}\label{sec:ensembleKalman}

\begin{proof}[Proof of Theorem \ref{thm:LEKIExpectation}.]
First, we write
\begin{align}\label{eq:EnKF_aux_1}
    |\upost_n - \upost_n^{\star} |= |(\msK(\widehat{\CC})-\msK(\CC)) (y  - \mcA\upr_n- \eta_n)| \le \|\msK(\widehat{\CC})-\msK(\CC)\||y  - \mcA\upr_n- \eta_n |.
\end{align}
For the first term in \eqref{eq:EnKF_aux_1}, it follows by the continuity of the Kalman gain operator \cite[Lemma 4.1]{kwiatkowski2015convergence} that
\begin{align}\label{eq:EnKF_aux_2}
\|\msK(\widehat{\CC})-\msK(\CC)\|\le \|\widehat{\CC}-\CC\|\|\A\|\|\Gamma^{-1}\|\left(1+\|\CC\| \|\A\|^2\|\Gamma^{-1}\|\right).
\end{align}
Combining the inequalities \eqref{eq:EnKF_aux_1}, \eqref{eq:EnKF_aux_2}, and Theorem \ref{thm:smalllengthscale} gives that
\begin{align*}
\E \left[ | \upost_n - \upost_n^{\star} | \mid u_n,\eta_n\right]\lesssim \|\A\|\|\Gamma^{-1}\| |y-\A u_n-\eta_n |\,\E\|\widehat{\CC}-\CC\|\lesssim c \bigg(\sqrt{\frac{\lambda^{-d}}{N}} \lor \frac{\lambda^{-d}}{N}\bigg),
\end{align*}
where $c=\|\A\|\|\Gamma^{-1}\| \|\CC\| |y-\A u_n-\eta_n|$. Applying the same argument to the perturbed observation EnKF update with localization,  $\upost_n^{\,\rho}$, Theorem \ref{thm:smalllengthscale} gives that 
\begin{align*}
\E \left[ | \upost_n^{\,\rho}- \upost_n^{\star}| \mid u_n,\eta_n\right]\lesssim c\bigg[ c (q)\bigg(\frac{\log(\lambda^{-d})}{N}\bigg)^{\frac{1-q}{2}}\bigg],
\end{align*}
where $c=\|\A\|\|\Gamma^{-1}\| \|\CC\| |y-\A u_n-\eta_n|$ and $c(q)$ is a constant that depends only on $q$.
\end{proof}

\section{Conclusions, Discussion, and Future Directions}\label{sec:Conclusions}
This paper has studied thresholded estimation of sparse covariance operators, lifting the theory of sparse covariance matrix estimation from finite to infinite dimension. We have established non-asymptotic bounds on the estimation error in terms of the sparsity
level of the covariance and the expected supremum of the field. In the challenging regime where the correlation lengthscale is small, we have shown that estimation via thresholding achieves an exponential improvement in sample complexity over the standard sample covariance estimator. As an application of the theory, we have demonstrated the advantage of using thresholded covariance estimators within ensemble Kalman filters. While our focus has been on studying the statistical benefit of estimation via thresholding, sparsifying the covariance estimator can also lead to significant computational speed-up in downstream tasks \cite{furrer2006covariance,chen2023linear,chen2023scalable}.

As mentioned in the discussion of Theorem \ref{thm:smalllengthscale}, a natural question is whether the convergence rate of our thresholded estimator is minimax optimal. For $\ell_q$-sparse covariance matrix estimation, \cite{cai2012optimal} established the minimax optimality of thresholded estimators. Inspired by the correspondence between our error bound \eqref{eq:smalllengthscalewiththresholding} and their optimal rate, we conjecture that our thresholded estimator is also minimax optimal in the infinite-dimensional setting.

Another interesting future direction is to relax the assumption of stationarity in our analysis of the small lengthscale regime. In finite dimension, \cite{cai2011adaptive} proposed adaptive thresholding estimators for sparse covariance matrix estimation that account for variability across individual entries and designed a data-driven choice of the prefactor $c_0$ through cross-validation. Other interesting extensions include covariance operator estimation for heavy-tailed distributions \cite{abdalla2022covariance} and robust covariance operator estimation \cite{goes2020robust,diakonikolas2023algorithmic}. Finally, connections with the thriving topics of infinite-dimensional regression \cite{mollenhauer2022learning} and operator learning \cite{de2023convergence, jin2022minimax} will be explored in future work. 

\medskip

{\bf{Acknowledgments}}
The authors are grateful for the support of NSF DMS-2237628, DOE DE-SC0022232, and the BBVA Foundation.

\bibliographystyle{imsart-nameyear} 
\bibliography{references}       

\begin{appendix}
 \section{Proof of Lemma \ref{lemma:hat_rhoN_rhoN}}\label{app:A}

This appendix contains the proof of Lemma \ref{lemma:hat_rhoN_rhoN}. We will use the following auxiliary result, which can be found in \cite[Lemma 2.10.6]{talagrand2022upper}.
\begin{lemma}\label{lemma:concentration-suprema}
    Under Assumption \ref{assumption:main1} (i), it holds with probability at least $1-2e^{-t}$ that
    \[
    \left|\frac{1}{N}\sum_{n=1}^{N}\sup_{x\in D} u_n(x) - \E\left[\sup_{x\in D} u(x)\right]\right| \le 
    \sqrt{\frac{2t}{N}}. 
    \]
\end{lemma}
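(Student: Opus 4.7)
The plan is to combine the Borell-TIS inequality for suprema of Gaussian fields with a standard subgaussian Hoeffding bound for empirical averages. First, I would apply Borell-TIS to a single copy $u$: because $u$ is almost surely continuous on the compact domain $D=[0,1]^d$ and $\sigma^2 := \sup_{x\in D}\mathbb{E}[u(x)^2]=1$ by Assumption \ref{assumption:main1}\ref{assumption:magnitude}, the random variable $Z:=\sup_{x\in D}u(x)$ has finite mean and satisfies
\[
\mathbb{P}\bigl[|Z-\mathbb{E} Z|\ge s\bigr]\le 2e^{-s^2/2}\qquad\text{for all }s\ge 0.
\]
Equivalently, the centered variable $Z-\mathbb{E} Z$ is subgaussian with variance proxy at most $1$, so its moment-generating function obeys $\mathbb{E}[e^{\lambda(Z-\mathbb{E} Z)}]\le e^{\lambda^2/2}$ for every $\lambda\in\mathbb{R}$.

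Second, I would apply this subgaussian property to the i.i.d.\ copies $Z_n := \sup_{x\in D} u_n(x)$, $n=1,\dots,N$. By independence, the centered MGF of $\frac{1}{N}\sum_{n=1}^{N} Z_n$ is bounded by $e^{\lambda^2/(2N)}$, and a standard Chernoff argument with optimization over $\lambda>0$ together with a union bound on the two tails yields
\[
\mathbb{P}\left[\left|\frac{1}{N}\sum_{n=1}^{N} Z_n-\mathbb{E} Z\right|\ge s\right]\le 2e^{-Ns^2/2}.
\]
Setting $s=\sqrt{2t/N}$ produces exactly the confidence level $2e^{-t}$ claimed in the lemma.

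The only delicate point is justifying the use of Borell-TIS in the functional setting: one needs a separable, a.s.\ continuous version of the process, together with a finite supremum-variance bound. Both are guaranteed by Assumption \ref{assumption:main1}, so there is no real obstacle --- the result is essentially a textbook combination of Gaussian concentration for Lipschitz functionals and Hoeffding-type concentration for i.i.d.\ subgaussian averages, which is why the statement is quoted directly from Talagrand's monograph rather than reproved.
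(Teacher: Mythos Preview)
Your proposal is correct and follows essentially the same approach as the paper: Gaussian concentration (Borell--TIS) gives that $\sup_{x\in D}u(x)$ is sub-Gaussian with variance proxy $\sup_{x\in D}\operatorname{Var}[u(x)]=1$, and then a Chernoff bound on the i.i.d.\ average yields the stated inequality. One small wording caveat: the tail bound $\mathbb{P}[|Z-\E Z|\ge s]\le 2e^{-s^2/2}$ does not by itself imply the MGF bound $\E[e^{\lambda(Z-\E Z)}]\le e^{\lambda^2/2}$ with the \emph{same} constant, so ``equivalently'' is slightly loose; however, Gaussian concentration for Lipschitz functionals gives the MGF bound directly, so the conclusion stands.
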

\begin{proof}
By Gaussian concentration, $\sup _{x \in D} u(x)$ is $\sup _{x\in D} \operatorname{Var}\left[u(x)\right]$-sub-Gaussian.
Since under Assumption \ref{assumption:main1} (i), $\sup _{x\in D} \operatorname{Var}\left[u(x)\right] = 1,$  a Chernoff bound argument gives the result.
\end{proof}


\begin{proof}[Proof of Lemma \ref{lemma:hat_rhoN_rhoN}]
We first prove (A). Without loss of generality, we assume $c_0=1$ in the definition of $\widehat{\rho}_N$ and $\rho_N$ in Theorem \ref{thm:mainresult}. Let $t>0$ and define $\mathcal{E}_t$ to be the event on which $\left|\frac{1}{N}\sum_{n=1}^{N}\sup_{x\in D} u_n(x) - \E\left[\sup_{x\in D} u(x)\right]\right| \le t$. It holds on $\mathcal{E}_t$ that
\begin{align*}
    \widehat{\rho}_N
    &\le \frac{1}{N}\lor \frac{\E[\sup_{x \in D} u(x)]+t}{\sqrt{N}}\lor \frac{(\E[\sup_{x \in D} u(x)]+t)^2}{N}\\
    &\le \frac{1}{N}\lor \frac{2\E[\sup_{x \in D} u(x)]}{\sqrt{N}}\lor \frac{2t}{\sqrt{N}}\lor \frac{4(\E[\sup_{x \in D} u(x)])^2}{N}\lor \frac{4t^2}{N}\\
   &\le 4\rho_N \lor \frac{2t}{\sqrt{N}}\lor \frac{4t^2}{N},
\end{align*}
and $\mathbb{P} \left[\widehat{\rho}_N \le 4\rho_N \lor \frac{2t}{\sqrt{N}}\lor \frac{4t^2}{N} \right] \ge \mathbb{P}\bigl[\mathcal{E}_t \bigr] \ge 1-2e^{-Nt^2/2}$ by Lemma~\ref{lemma:concentration-suprema}. It follows then that 
   \begin{align*}
       \E \big[\,\widehat{\rho}_N^{\,p} \big]&=
       p\int_0^{\infty} t^{p-1}\,\mathbb{P}\big[\,  \widehat{\rho}_N\ge t\big] dt=p\int_{0}^{4\,\rho_N}t^{p-1}\,\mathbb{P}\big[\,  \widehat{\rho}_N\ge t\big] dt+p\int_{4\,\rho_N}^{\infty}t^{p-1}\,\mathbb{P}\big[\,  \widehat{\rho}_N\ge t\big] dt\\
       &\le (4\rho_N)^p+2p\int_{4\,\rho_N}^{\infty} t^{p-1}\,e^{-\frac{N}{2}\min \{\frac{Nt^2}{4},\frac{Nt}{4}\}} dt\lesssim_p \rho_N^p+ \frac{1}{N^p}\lesssim_p \rho_N^p.
   \end{align*}

We next show (B).  To prove \eqref{eq:cor42_aux1}, we can assume $c_0=1$ without loss of generality. Notice that
\begin{align*}
    \mathbb{P} [\widehat{\rho}_N<t\rho_N]&=\mathbb{P}\left[\Big(\frac{1}{N}<t\rho_N\Big) \bigcap \Big(\frac{1}{\sqrt{N}} \Big(\frac{1}{N}\sum_{n=1}^{N}\sup_{x\in D} u_{n}(x)\Big)< t\rho_N\Big)  \bigcap  \Big(\frac{1}{N}\Big(\frac{1}{N}\sum_{n=1}^{N}\sup_{x\in D} u_{n}(x)\Big)^2 <t\rho_N\Big) \right]\\
   & \hspace{-0.52cm}  = 1-\mathbb{P}\left[\Big(\frac{1}{N}\ge t\rho_N\Big) \bigcup \Big(\frac{1}{\sqrt{N}} \Big(\frac{1}{N}\sum_{n=1}^{N}\sup_{x\in D} u_{n}(x)\Big)\ge t\rho_N\Big)  \bigcup  \Big(\frac{1}{N}\Big(\frac{1}{N}\sum_{n=1}^{N}\sup_{x\in D} u_{n}(x)\Big)^2 \ge t\rho_N\Big)\right].
\end{align*}
We consider three cases.

\emph{Case 1:} If $\E [\sup_{x \in D} u (x)]<\frac{1}{\sqrt{N}}$, then $\rho_N=\frac{1}{N}$ and
$\mathbb{P} [\widehat{\rho}_N<t\rho_N]\le 1-\mathbb{P}\Big[\frac{1}{N}\ge t\rho_N\Big]=0$.

\emph{Case 2:} If $\frac{1}{\sqrt{N}}\le \E[\sup_{x \in D} u(x)]\le \sqrt{N}$, then $\rho_N=\frac{1}{\sqrt{N}}\E[\sup_{x \in D} u(x)]$ and
\begin{align*}
\mathbb{P} [\widehat{\rho}_N<t\rho_N]&\le 1-\mathbb{P}\left[\frac{1}{\sqrt{N}} \Big(\frac{1}{N}\sum_{n=1}^{N}\sup_{x\in D} u_{n}(x)\Big)\ge t\rho_N\right]
=1-\mathbb{P}\left[\frac{1}{N}\sum_{n=1}^{N}\sup_{x\in D} u_{n}(x)\ge t\, \E [\sup_{x \in D} u(x)]\right]\\
&\le \mathbb{P}\left[\Big|\frac{1}{N}\sum_{n=1}^{N}\sup_{x\in D} u_n(x) - \E[\sup_{x \in D} u(x)]\Big|\ge (1-t)\E[\sup_{x \in D} u(x)]\right]\\
&\le 2\exp\Big(-\frac{1}{2}(1-t)^2 N(\E[\sup_{x \in D} u(x)])^2\Big),
\end{align*}
where the last step follows by Lemma \ref{lemma:concentration-suprema}. 

\emph{Case 3:} If $\E[\sup_{x \in D} u(x)]>\sqrt{N}$, then $\rho_N=\frac{1}{N}(\E[\sup_{x \in D} u(x)])^2$ and
\begin{align*}
    \mathbb{P} [\widehat{\rho}_N<t\rho_N]&\le 1-\mathbb{P}\left[\frac{1}{N}\Big(\frac{1}{N}\sum_{n=1}^{N}\sup_{x\in D} u_{n}(x)\Big)^2 \ge t\rho_N\right]
    =1- \mathbb{P}\left[\Big|\frac{1}{N}\sum_{n=1}^{N}\sup_{x\in D} u_{n}(x)\Big| \ge \sqrt{t} \,\E [\sup_{x \in D} u(x)]\right]\\
    &\le  \mathbb{P}\left[\Big|\frac{1}{N}\sum_{n=1}^{N}\sup_{x\in D} u_n(x) - \E[\sup_{x \in D} u(x)]\Big|\ge (1-\sqrt{t})\E[\sup_{x \in D} u(x)]\right]\\
&\le 2\exp\Big(-\frac{1}{2}(1-\sqrt{t})^2 N(\E[\sup_{x \in D} u(x)])^2\Big).
\end{align*}

Combining the three cases above and noticing that $(1-\sqrt{t})^2\le (1-t)^2$ for $t\in (0,1)$ yields the first inequality in \eqref{eq:cor42_aux1}. To prove \eqref{eq:cor42_aux2}, recall that $1\le c_0\le \sqrt{N}$ in the definition of $\rho_N$.
If $\E[\sup_{x \in D} u(x)]<1/\sqrt{N}$, then \eqref{eq:cor42_aux2} is trivial. If $\frac{1}{\sqrt{N}}\le \E[\sup_{x \in D} u(x)]\le \sqrt{N}$, then $\rho_N=\frac{c_0}{\sqrt{N}}\E [\sup_{x \in D} u(x)]$ and $N(\E [\sup_{x \in D} u(x)])^2 = \frac{N^2\rho_N^2}{c_0^2}\ge N\rho_N^2$, so that 
\[
2\,e^{-\frac{1}{2}(1-\sqrt{t})^2 N (\E [\sup_{x \in D} u(x)])^2}\mathbf{1}\big\{\E[\sup_{x \in D} u(x)]\ge 1/\sqrt{N}\big\}\le 2\,e^{-\frac{1}{2}(1-\sqrt{t})^2 N \rho_N^2}.
\]
If $\E[\sup_{x \in D} u(x)]>\sqrt{N}$, then $\rho_N=\frac{c_0}{N}(\E[\sup_{x \in D} u(x)])^2$ and $N(\E[\sup_{x \in D} u(x)])^2=\frac{N^2\rho_N}{c_0}\ge N^{3/2}\rho_N\ge N\rho_N$, so that 
\[
2\,e^{-\frac{1}{2}(1-\sqrt{t})^2 N (\E [\sup_{x \in D} u(x)])^2}\mathbf{1}\big\{\E[\sup_{x \in D} u(x)]\ge 1/\sqrt{N}\big\}\le 2\,e^{-\frac{1}{2}(1-\sqrt{t})^2 N \rho_N}.\qedhere
\]
\end{proof}

\section{Bound on Operator Norm}\label{app:B}

\begin{lemma}\label{lemma:operator_norm_bound}
   Let $D\subset \R^d$. For an integral operator $K$ on $L^2(D)$,
   \[
   (K\psi)(x):=\int_{D} k(x,x')\psi(x')dx', \quad \psi\in L^2(D),
   \]
   it holds that
   \[
   \|K\|^2 \le \left(\sup_{x}\int_{D}|k(x,x')|dx'\right)\cdot\left(\sup_{x'}\int_{D}|k(x,x')|dx\right).
   \]
   Further, if $k(x,x')=k(x',x)$, then
   \[
   \|K\| \le \sup_{x}\int_{D}|k(x,x')|dx'.
   \]
\end{lemma}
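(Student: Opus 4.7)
\medskip

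The plan is to prove the bound via the classical Schur test for integral operators, which is essentially a double application of the Cauchy--Schwarz inequality with a careful ``splitting'' of the kernel.

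First, I would fix $\psi \in L^2(D)$ and estimate $(K\psi)(x)$ pointwise by writing the kernel as $|k(x,x')| = |k(x,x')|^{1/2}\cdot |k(x,x')|^{1/2}$ and pairing one factor with $|\psi(x')|$. Applying Cauchy--Schwarz inside the integral yields
\begin{align*}
|(K\psi)(x)|^2 \le \left( \int_D |k(x,x')|\,dx' \right) \left( \int_D |k(x,x')|\,|\psi(x')|^2\,dx' \right).
\end{align*}
The first factor can be bounded uniformly in $x$ by $M_1 := \sup_{x}\int_D |k(x,x')|\,dx'$, so
\begin{align*}
|(K\psi)(x)|^2 \le M_1 \int_D |k(x,x')|\,|\psi(x')|^2\,dx'.
\end{align*}

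Next, I would integrate over $x \in D$ and apply Fubini's theorem to exchange the order of integration, which is justified because the integrand is nonnegative:
\begin{align*}
\|K\psi\|_{L^2}^2 \le M_1 \int_D |\psi(x')|^2 \left( \int_D |k(x,x')|\,dx \right) dx'.
\end{align*}
Bounding the inner integral by $M_2 := \sup_{x'}\int_D |k(x,x')|\,dx$ yields $\|K\psi\|_{L^2}^2 \le M_1 M_2 \|\psi\|_{L^2}^2$, from which the first claim follows by taking the supremum over $\psi$ with $\|\psi\|_{L^2}=1$.

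For the second claim, if $k(x,x') = k(x',x)$, then a swap of dummy variables shows $M_1 = M_2$, so the two suprema coincide and the bound reduces to $\|K\| \le M_1$. I expect no serious obstacle: the main subtlety is simply ensuring that Fubini applies (which it does, since $|k|$ is measurable and we integrate a nonnegative integrand) and that the Cauchy--Schwarz splitting is set up symmetrically in the two integration variables.
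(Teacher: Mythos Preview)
Your proposal is correct and follows essentially the same approach as the paper: both use the Schur test argument, splitting $|k(x,x')|=|k(x,x')|^{1/2}\cdot|k(x,x')|^{1/2}$, applying Cauchy--Schwarz in $x'$, bounding the first factor by the row-supremum, integrating in $x$, and swapping the order of integration to pull out the column-supremum. The symmetric case is handled identically in both.
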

\begin{proof}
For any $\psi\in L^2(D)$ with $\|\psi\|_{L^2(D)}=1$,
\begin{align*}
\|K\psi\|_{L^2(D)}^2&=\int_{D}\Big(\int_D k(x,x')\psi(x') dx'\Big)^2 dx\\ &\le\int_{D}\Big(\int_D |k(x,x')|\cdot|\psi(x')| dx'\Big)^2 dx\\
&=\int_{D}\Big(\int_D \sqrt{|k(x,x')|}\cdot\sqrt{|k(x,x')|}|\psi(x')| dx'\Big)^2 dx\\
&\overset{\text{(i)}}{\le} \int_{D} \left(\int_{D}|k(x,x')|dx'\right) \left(\int_{D} |k(x,x')|\psi(x')^2 dx'\right) dx\\
&\le \left(\sup_{x}\int_{D}|k(x,x')|dx'\right)\cdot \int_{D}\int_{D} |k(x,x')|\psi(x')^2 dx'dx\\
&\le \left(\sup_{x}\int_{D}|k(x,x')|dx'\right)\cdot \int_{D}\left(\int_{D} |k(x,x')| dx\right)\psi(x')^2dx'\\
&\le\left(\sup_{x}\int_{D}|k(x,x')|dx'\right)\cdot\left(\sup_{x'}\int_{D}|k(x,x')|dx\right)\cdot \left(\int_{D}\psi(x')^2 dx'\right)\\
&=\left(\sup_{x}\int_{D}|k(x,x')|dx'\right)\cdot\left(\sup_{x'}\int_{D}|k(x,x')|dx\right),
\end{align*}
where (i) follows by Cauchy-Schwarz inequality. Therefore,
\[
\|K\|^2=\sup_{\|\psi\|_{L^2(D)}=1} \|K\psi\|_{L^2(D)}^2 \le \left(\sup_{x}\int_{D}|k(x,x')|dx'\right)\cdot\left(\sup_{x'}\int_{D}|k(x,x')|dx\right).
\]
Further, if $k(x,x')=k(x',x)$, then
\[
\|K\| \le \sqrt{\left(\sup_{x}\int_{D}|k(x,x')|dx'\right)\cdot\left(\sup_{x'}\int_{D}|k(x,x')|dx\right)}=\sup_{x}\int_{D}|k(x,x')|dx',
\]
which completes the proof.
\end{proof}

\section{Auxiliary Technical Result}\label{app:C}

\begin{lemma}\label{lemma:technique}
For any $\alpha>\beta>0$ and $q>0$, it holds that
\[
\int_{0}^{\alpha-\beta} e^{-\frac{1}{2}t^2}(\alpha-t)^{-q-1}\,dt\le \frac{2\beta^{-q}}{q}e^{-\frac{(\alpha-\beta)^2}{8}}+\frac{1}{q}\Big(\frac{\alpha}{2}\Big)^{-q}.
\]
\end{lemma}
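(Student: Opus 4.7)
The plan is to split the integral into two pieces at the midpoint $t_0 := (\alpha - \beta)/2$. On the left half $[0, t_0]$, the singularity $(\alpha - t)^{-q-1}$ is bounded because $\alpha - t$ stays away from $0$; on the right half $[t_0, \alpha - \beta]$, the Gaussian factor $e^{-t^2/2}$ is uniformly small. The inequality $\alpha > \beta$ is what makes $t_0$ a valid interior point, and no further subtlety should appear.

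For the first piece, I would drop $e^{-t^2/2} \le 1$ and use the elementary bound
\[
\alpha - t \ge \alpha - t_0 = \frac{\alpha + \beta}{2} \ge \frac{\alpha}{2},
\]
valid throughout $[0, t_0]$. Integrating $(\alpha - t)^{-q-1}$ on this interval gives
\[
\int_0^{t_0} e^{-t^2/2}(\alpha - t)^{-q-1}\, dt \le \frac{1}{q}\Bigl[(\alpha - t_0)^{-q} - \alpha^{-q}\Bigr] \le \frac{1}{q}\Bigl(\frac{\alpha}{2}\Bigr)^{-q},
\]
which matches the second term in the stated bound.

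For the second piece, I would pull out the uniform bound $e^{-t^2/2} \le e^{-t_0^2/2} = e^{-(\alpha - \beta)^2/8}$ and then integrate the polynomial factor to the endpoint $\alpha - \beta$, obtaining
\[
\int_{t_0}^{\alpha - \beta} e^{-t^2/2}(\alpha - t)^{-q-1}\, dt
\le e^{-(\alpha - \beta)^2/8}\cdot \frac{1}{q}\Bigl[\beta^{-q} - \Bigl(\frac{\alpha + \beta}{2}\Bigr)^{-q}\Bigr]
\le \frac{\beta^{-q}}{q}\, e^{-(\alpha - \beta)^2/8}.
\]
Adding the two pieces yields the lemma (with some slack, since the displayed inequality only requires a prefactor of $2$ on the first term rather than the $1$ obtained here). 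The only judgement call in the proof is the choice of split point $t_0$; any other choice would not simultaneously produce the clean factors $\beta^{-q} e^{-(\alpha-\beta)^2/8}$ and $(\alpha/2)^{-q}$, so I do not expect any substantial obstacle beyond selecting it correctly.
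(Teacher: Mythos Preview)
Your proof is correct and in fact slightly sharper than the paper's: you obtain the bound with prefactor $1$ on the $\beta^{-q}e^{-(\alpha-\beta)^2/8}$ term rather than $2$. The paper takes a different route: it first integrates by parts, writing
\[
\int_{0}^{\alpha-\beta} e^{-t^2/2}(\alpha-t)^{-q-1}\,dt=\frac{\beta^{-q}}{q}e^{-(\alpha-\beta)^2/2}-\frac{\alpha^{-q}}{q}+\frac{1}{q}\int_{0}^{\alpha-\beta} e^{-t^2/2}\,t\,(\alpha-t)^{-q}\,dt,
\]
and only then splits the remaining integral at $(\alpha-\beta)/2$, using the extra factor of $t$ to evaluate $\int e^{-t^2/2}t\,dt$ in closed form on each half. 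This produces two contributions of size $\frac{\beta^{-q}}{q}e^{-(\alpha-\beta)^2/8}$ (one from the boundary term, one from the right half-integral), hence the factor $2$. Your direct splitting avoids the integration by parts entirely and is more elementary; the paper's approach gains nothing except perhaps the aesthetic of integrating $e^{-t^2/2}t$ exactly rather than bounding $e^{-t^2/2}\le 1$ on the left half.
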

\begin{proof}
Integrating by parts gives that
\begin{align*}
\int_{0}^{\alpha-\beta} e^{-\frac{1}{2}t^2}(\alpha-t)^{-q-1}dt&=\frac{\beta^{-q}}{q}e^{-\frac{(\alpha-\beta)^2}{2}}-\frac{\alpha^{-q}}{q}+\int_{0}^{\alpha-\beta} e^{-\frac{1}{2}t^2}t\frac{(\alpha-t)^{-q}}{q}dt\\
&=\frac{\beta^{-q}}{q}e^{-\frac{(\alpha-\beta)^2}{2}}-\frac{\alpha^{-q}}{q}+\left(\int_{0}^{\frac{\alpha-\beta}{2}}+\int_{\frac{\alpha-\beta}{2}}^{\alpha-\beta}\right) e^{-\frac{1}{2}t^2}t\frac{(\alpha-t)^{-q}}{q}dt.
\end{align*}
First,
\[
\int_{0}^{\frac{\alpha-\beta}{2}} e^{-\frac{1}{2}t^2}t\frac{(\alpha-t)^{-q}}{q}dt \le \frac{1}{q}\Big(\alpha-\frac{\alpha-\beta}{2}\Big)^{-q}\int_0^{\frac{\alpha-\beta}{2}} e^{-\frac{1}{2}t^2} t \,dt\le \frac{1}{q}\Big(\frac{\alpha+\beta}{2}\Big)^{-q} \le \frac{1}{q}\Big(\frac{\alpha}{2}\Big)^{-q}.
\]
Second,
\[
\int_{\frac{\alpha-\beta}{2}}^{\alpha-\beta} e^{-\frac{1}{2}t^2}t\frac{(\alpha-t)^{-q}}{q}dt\le \frac{1}{q}(\alpha-(\alpha-\beta))^{-q}\int_{\frac{\alpha-\beta}{2}}^{\alpha-\beta} e^{-\frac{1}{2}t^2}t \,dt \le \frac{\beta^{-q}}{q} e^{-\frac{(\alpha-\beta)^2}{8}}.
\]
Thus,
\begin{align*}
\int_{0}^{\alpha-\beta} e^{-\frac{1}{2}t^2}(\alpha-t)^{-q-1}dt&\le \frac{\beta^{-q}}{q}e^{-\frac{(\alpha-\beta)^2}{2}}-\frac{\alpha^{-q}}{q}+\frac{\beta^{-q}}{q}e^{-\frac{(\alpha-\beta)^2}{8}}+\frac{1}{q}\Big(\frac{\alpha}{2}\Big)^{-q}\\
&\le \frac{2\beta^{-q}}{q}e^{-\frac{(\alpha-\beta)^2}{8}}+\frac{1}{q}\Big(\frac{\alpha}{2}\Big)^{-q}. \qedhere
\end{align*}
\end{proof}

 \end{appendix}

\end{document}